\newtheorem{theorem}{Theorem}[section]
\newtheorem{lemma}[theorem]{Lemma}
\theoremstyle{definition}
\newtheorem{defn}[theorem]{Definition}
\newtheorem{ex}[theorem]{Example}
\newtheorem{conj}[theorem]{Conjecture}
\theoremstyle{remark}
\numberwithin{equation}{section}
\begin{document}
	\title{SOME GENERALIZED JORDAN MAPS ON TRIANGULAR RINGS FORCE ADDITIVITY}

    	\author{Sk Aziz}
    \address{Department of Mathematics, Indian Institute of Technology Patna, Patna-801106}
    \email{aziz\_2021ma22@iitp.ac.in}
    \thanks{}
    
	\author{Arindam Ghosh}
	\address{Department of Mathematics, Government Polytechnic Kishanganj, Kishanganj-855116}
	\curraddr{}
	\email{E-mail: arindam.rkmrc@gmail.com}
	\thanks{}
	
	\author{Om Prakash$^{\star}$}
	\address{Department of Mathematics, Indian Institute of Technology Patna, Patna-801106}
	\curraddr{}
	\email{om@iitp.ac.in}
	\thanks{* Corresponding author}

	\subjclass[2020]{16W10, 16S50}
	
	\keywords{Additivity, Jordan derivation, Two-sided centralizer, $(m,n)$-derivation, Triangular ring}
	
	\date{}
	
	\dedicatory{}
	
	\maketitle
\begin{abstract}
	In this paper, we show that a map $\delta$ over a triangular ring $\mathcal{T}$ satisfying
    $\delta(ab+ba)=\delta(a)b+a \tau(b)+\delta(b)a+b\tau(a)$,
    for all $a,b\in \mathcal{T}$ and for some maps $\tau$ over $\mathcal{T}$ satisfying
    $\tau(ab+ba)=\tau(a)b+a \tau(b)+\tau(b)a+b\tau(a)$,    
     is additive. Also, it is shown that a map $T$ on $\mathcal{T}$ satisfying
    $T(ab)=T(a)b=aT(b)$,
    for all $a,b\in \mathcal{T}$, is additive. Further, we establish that if a map $D$ over $\mathcal{T}$ satisfies
    $(m+n)D(ab)=2mD(a)b+2naD(b)$,
    for all $a,b\in \mathcal{T}$ and integers $m,n\geq 1$, then $D$ is additive.
\end{abstract}

\section{Introduction}
    Let $R$ be a ring. A map $f: R \rightarrow R$ is said to be additive if
    \begin{align*}
    	f(a+b)=f(a)+f(b)
    \end{align*}
    for all $a, b\in R$. In 1957, Herstein \cite{herstein1957jordan} introduced Jordan derivation over rings as an additive map $\tau:R \rightarrow R$ satisfies
    \begin{align*}
    	\tau(a^2)=\tau(a)a+a \tau(a)
    \end{align*}
    for all $a\in R$. He also proved that any Jordan derivation over a prime ring is a derivation with some torsion restrictions. In 2003, Jing and Lu \cite{jing2003generalized} introduced generalized Jordan derivation and proved that every generalized Jordan derivation is a generalized derivation over a $2$-torsion-free prime ring. Recall that an additive map $\delta: R \rightarrow R$ is said to be a generalized Jordan derivation if
    \begin{align*}
    	\delta(a^2)=\delta(a)a+a \tau(a)
    \end{align*}
    for all $a\in R$ and for some Jordan derivation $\tau: R \rightarrow R$. In 1952, Wendel \cite{wendel1952left} introduced the concept of a centralizer. An additive map $T: R \rightarrow R$ is said to be a left centralizer if
    \begin{align*}
    	T(ab)=T(a)b
    \end{align*}
    for all $a,b\in R$. Similarly, we define the right centralizer over a ring. An additive map $T: R \rightarrow R$ is said to be a two-sided centralizer if $T$ is both left and right centralizer.
    In 2014, Ali and Fo{\v{s}}ner  \cite{ali2014generalized} introduced the concept of $(m,n)$-derivation. Let $m,n\geq 0$ be integers. Then an additive map $D:R \rightarrow R$ is said to be an $(m,n)$-derivation if
    \begin{align*}
    	(m+n)D(ab)=2mD(a)b+2naD(b),
    \end{align*}
    for all $a,b\in R$. In fact, for $m=1$ and $n=1$, $D$ is a normal derivation on a $2$-torsion free ring $R$.\\

    Let $n\geq 2$ be any integer. Then a ring $R$ is said to be $n$-torsion free if $na=0$ for some $a\in R$ implies $a=0$. It is an interesting question that when a map over a ring (satisfying some functional equations) is additive. The question was first raised by Rickart \cite{rickart1948one} in 1948. He showed that any bijective and multiplicative mapping of a Boolean ring $\textsc{B}$ onto any arbitrary ring $\textsc{S}$ is additive. Later, Johnson made an extensive study of these results in \cite{johnson1958rings}. In 1969, Martindale III \cite{martindale1969multiplicative} proved that any multiplicative isomorphism of a ring $R$ onto an arbitrary ring $S$ is additive under the existence of a family of idempotent elements in $R$ satisfying certain conditions. In 1991, by assuming Martindale's conditions, Daif \cite{daif1991multiplicative} proved that any multiplicative derivation of a ring $R$ is additive.
    Later, in 2009, Wang \cite{wang2009additivity} proved that any $n$-multiplicative derivation $d$ of $R$ is additive by using Martindale's conditions where $n>1$ is an integer. He proved that any $n$-multiplicative isomorphism or $n$-multiplicative derivation of a standard operator algebra $A$ over a Banach space $X$ with $dim(X)\geq 2$, is additive.\\

    Again, in 2011, Wang \cite{wang2011additivity} proved that any $n$-multiplicative derivation $d$ of a triangular ring $\mathcal{T}$ is additive, with some assumptions on $\mathcal{T}$. In 2014, Ferreira \cite{ferreira2014multiplicative} proved that every $m$-multiplicative isomorphism from a $n$-triangular matrix ring $\mathcal{T}$ onto any ring $\mathcal{S}$ is additive with some assumptions on $\mathcal{T}$ where $n,m>1$ are integers. He also showed that every $m$-multiplicative derivation of $\mathcal{T}$ is additive. In 2015, Ferreira \cite{ferreira2015jordan} again revisit and proved that a map $\tau$ on a triangular ring $\mathcal{T}$ satisfying
    \begin{align*}
    	\tau(ab+ba)=\tau(a)b+a \tau(b)+\tau(b)a+b\tau(a)
    \end{align*}
    for all $a,b\in \mathcal{T}$, is additive with some assumptions on $\mathcal{T}$. Moreover, if $\mathcal {T}$ is $2$-torsion free, then $\tau$ is a Jordan derivation. Ferreira's result \cite{ferreira2015jordan} motivated us to work to find out the conditions under which a multiplicative generalized Jordan derivation is additive on a triangular matrix ring. This paper provides an affirmative answer to the above question.\\

    In 2014, El-Sayed et al. \cite{el2014multiplicativity} proved that every map $T$ on a prime ring with a non-trivial idempotent satisfying
    \begin{equation}
    	T(ab)=T(a)b
    \end{equation}
    for all $a,b \in R$, is additive. Note that a ring $R$ is said to be a prime ring if $aRb=0$ for some $a, b\in R$ implies either $a=0$ or $b=0$ and is said to be a semi-prime ring if $aRa=0$ for some $a\in R$ implies that $a=0$. Thus, every prime ring is semi-prime, but the converse is not true.
    Motivated by the work of  El-Sayed et al. \cite{el2014multiplicativity}, we prove that a map $T$ on $\mathcal{T}$ satisfying
    \begin{align*}
    	T(ab)=T(a)b=a T(b),
    \end{align*}
    for all $a,b\in \mathcal{T}$, is additive. \\

    Further, we prove that any map $D$ on $\mathcal{T}$ which satisfies
    \begin{align*}
    	(m+n)D(ab)=2mD(a)b+2naD(b),
    \end{align*}
    for all $a,b\in \mathcal{T}$ and for some integers $m,n\geq 1$, is additive.\\

    Throughout the work, we use the definition of triangular algebra given by Wang \cite{wang2011additivity}.

    \begin{defn}
    	\label{def1}
    	Let $A$ and $B$ be two rings, and $M$ be an $(A, B)$-bimodule such that\\
    	
    	(a) $M$ is faithful as left $A$-module and right $B$-module;
    	
    	(b) If $m \in M$ is such that $AmB = 0$ implies $m = 0$.\\
    	
    	Then the ring
    	\begin{align*}
    		\mathcal{T}=\text{Tri}(A,M,B)=  \Biggl\{ \left( \begin{array}{ccc}
    			a & m \\
    			& b \end{array}  \right)\vert \enspace a \in A,\, b \in B,\, m \in M    \Biggl\},
    	\end{align*}
    	
    	under usual matrix addition and multiplication is said to be a triangular ring.
    \end{defn}

    Let $\mathcal{T}_{11}=  \Biggl\{\left( \begin{array}{ccc}
    	a & 0 \\
    	& 0 \end{array}  \right)\vert \enspace a \in A   \Biggl\}$,\hspace{0.2cm} $\mathcal{T}_{12}=  \Biggl\{\left( \begin{array}{ccc}
    	0 & m \\
    	& 0 \end{array}  \right)\vert \enspace m \in M   \Biggl\}$ and

    $\mathcal{T}_{22}=  \Biggl\{\left( \begin{array}{ccc}
    	0 & 0 \\
    	& b \end{array}  \right)\vert \enspace b \in B   \Biggl\}$.

    Then $\mathcal{T}=\mathcal{T}_{11} \oplus \mathcal{T}_{12} \oplus \mathcal{T}_{22}$. Henceforth, $t_{ij}\in \mathcal{T}_{ij}$. Also, $t_{ij}t_{kl}=0$, if $j\neq k$ where $j,~k\in \{1,2\}$.

    \section{GENERALIZED JORDAN DERIVATIONS ON TRIANGULAR RINGS}

    \begin{theorem}
    	\label{der1}
    	Let $\mathcal{T}$ be a triangular ring with
    	conditions:\\
    	
    	(i) $aA=0$ for some $a\in A$ implies $a=0$;
    	
    	(ii) $bB=0$ for some $b\in B$ implies $b=0$;
    	
    	(iii) $A$ and $B$ are rings with identity.\\

    	If a function $\delta:\mathcal{T}  \rightarrow \mathcal{T}$ satisfies
    	\begin{align*}
    		\delta(ab+ba)=\delta(a)b+a \tau(b)+\delta(b)a+b\tau(a),
    	\end{align*}
    	
    	for all $a,b\in \mathcal{T}$ where $\tau:\mathcal{T} \rightarrow \mathcal{T}$ satisfies
    	\begin{align*}
    		\tau(ab+ba)=\tau(a)b+a \tau(b)+\tau(b)a+b\tau(a),
    	\end{align*}

    	for all $a,b\in \mathcal{T}$, then $\delta$ is additive. Moreover, if $\mathcal{T}$ is $2$-torsion free, then $\delta$ is a generalized Jordan derivation.
    \end{theorem}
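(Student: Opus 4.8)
The plan is to run a Peirce-decomposition argument relative to the idempotents $e_{11},e_{22}\in\mathcal{T}$ arising from the identities of $A$ and $B$, so that $\mathcal{T}_{ij}=e_{ii}\mathcal{T}e_{jj}$ and each $t\in\mathcal{T}$ is uniquely $t=t_{11}+t_{12}+t_{22}$; throughout write $x\circ y=xy+yx$. Since $\tau$ satisfies the multiplicative Jordan-derivation equation, the cited theorem of Ferreira \cite{ferreira2015jordan} (whose hypotheses on $\mathcal{T}$ are implied by (i)--(iii)) already gives that $\tau$ is additive, which we use freely; and $a=b=0$ gives $\delta(0)=0$. We aim to show: $\delta$ is additive on $\mathcal{T}_{12}$ and on each diagonal corner $\mathcal{T}_{11}$, $\mathcal{T}_{22}$; that $\delta(a_{11}+b_{12})=\delta(a_{11})+\delta(b_{12})$; and that $\delta(z+c_{22})=\delta(z)+\delta(c_{22})$ for $z\in\mathcal{T}_{11}\oplus\mathcal{T}_{12}$, $c_{22}\in\mathcal{T}_{22}$. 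Granting all of these, a general $x,y$ are split as $x+y=\big[(x_{11}+y_{11})+(x_{12}+y_{12})\big]+(x_{22}+y_{22})$ and additivity follows by repeatedly peeling off components.

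For $\mathcal{T}_{12}$: if $t_{12},s_{12}\in\mathcal{T}_{12}$ then $e_{11}\circ(t_{12}+s_{12})=t_{12}+s_{12}=e_{22}\circ(t_{12}+s_{12})$; applying the functional equation to these, subtracting the equations for $t_{12}$ and $s_{12}$ separately and using additivity of $\tau$, one finds that $\Delta:=\delta(t_{12}+s_{12})-\delta(t_{12})-\delta(s_{12})$ satisfies $\Delta=\Delta e_{11}$ and $\Delta=e_{22}\Delta$, so $\Delta\in\mathcal{T}_{11}\cap\mathcal{T}_{22}=\{0\}$. The cross-relations are obtained similarly: writing a sum as the value of a suitable Jordan product (e.g.\ $a_{11}m_{12}=a_{11}\circ m_{12}$, $m_{12}c_{22}=c_{22}\circ m_{12}$, $2c_{22}=(a_{11}+c_{22})\circ e_{22}$ with $a_{11}\circ e_{22}=0$) one checks that on the relevant products the Jordan bracket splits additively, so $\mathcal{T}_{12}$-additivity lets one split the corresponding $\delta$-values; comparing with the functional equation for the sum forces the defect into two distinct corners $\mathcal{T}_{ii}$ — or makes it right-annihilated by all of $\mathcal{T}_{12}$, killing its $\mathcal{T}_{11}$-part by faithfulness of $M$ — hence it vanishes.

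The crux is additivity on the diagonal corners. For $\mathcal{T}_{11}$ it is clean: for $\Delta=\delta(a_{11}+a_{11}')-\delta(a_{11})-\delta(a_{11}')$, the identity applied to $(a_{11}+a_{11}')\circ c_{22}=0$ gives $\Delta c_{22}=0$ for all $c_{22}\in\mathcal{T}_{22}$, whence (taking $c_{22}=e_{22}$) the $\mathcal{T}_{12}$- and $\mathcal{T}_{22}$-parts of $\Delta$ vanish; and the identity applied to $(a_{11}+a_{11}')\circ m_{12}=a_{11}m_{12}+a_{11}'m_{12}$ (using $\mathcal{T}_{12}$-additivity) gives $\Delta m_{12}=0$ for all $m_{12}$, so the $\mathcal{T}_{11}$-part vanishes by faithfulness of $M$ as a left $A$-module; thus $\Delta=0$. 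The corner $\mathcal{T}_{22}$ is the real obstacle, because in the functional equation the values of $\delta$ occur only multiplied on the right, so the parallel computation on $(c_{22}+c_{22}')\circ a_{11}=0$ controls only $\Delta a_{11}$, i.e.\ only the $\mathcal{T}_{11}$-part of the defect. To get the remaining two parts one is forced to compute the individual Peirce components of $c_{22}\mapsto\delta(c_{22})$: from $c_{22}\circ a_{11}=0$ and $c_{22}\circ m_{12}=m_{12}c_{22}$, by isolating the $\mathcal{T}_{11}$-, $\mathcal{T}_{12}$- and $\mathcal{T}_{22}$-components and using condition (ii), the right-faithfulness of $M$ over $B$, and condition (b) in the definition of a triangular ring, one shows each such component is additive in $c_{22}$. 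I expect this to absorb the bulk of the technical effort.

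Finally, once $\delta$ is additive, the ``moreover'' is immediate: if $\mathcal{T}$ is $2$-torsion free, putting $b=a$ in the now bi-additive identity gives $2\delta(a^2)=2\big(\delta(a)a+a\tau(a)\big)$, hence $\delta(a^2)=\delta(a)a+a\tau(a)$; the same manipulation on $\tau$'s equation shows $\tau$ is a Jordan derivation, so $\delta$ is a generalized Jordan derivation.
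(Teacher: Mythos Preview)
Your overall Peirce--decomposition plan matches the paper's (Lemmas~\ref{lem1}--\ref{lem5}): first the mixed sums, then additivity on $\mathcal T_{12}$, then on the diagonal corners, then the full splitting. Your shortcut for $\mathcal T_{12}$ via the idempotents is a bit slicker than the paper's route through $\delta(a+m)$ and $\delta(am+nb)$; one small correction: the $e_{22}$ computation actually yields $\Delta=\Delta e_{22}$ (not $\Delta=e_{22}\Delta$), since in the identity $\delta$ is always multiplied on the right---but the conclusion $\Delta\in\mathcal T_{11}\cap(\mathcal T_{12}\oplus\mathcal T_{22})=\{0\}$ is unchanged. The ``moreover'' clause is handled exactly as in the paper.

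Where your plan runs into trouble is precisely the $\mathcal T_{22}$ step you flag. You are right that the identity for $\delta$ is asymmetric (unlike Ferreira's identity for $\tau$, the values of $\delta$ appear only multiplied on the \emph{right}), and the paper's ``similarly'' at Lemma~\ref{lem4}(ii) glosses over this. But your proposed repair shares the same blind spot. In both relations you invoke,
\[
c_{22}\circ a_{11}=0 \quad\text{and}\quad c_{22}\circ m_{12}=m_{12}c_{22},
\]
the term $\delta(c_{22})$ enters only as $\delta(c_{22})a_{11}=[\delta(c_{22})]_{11}a_{11}$ and $\delta(c_{22})m_{12}=[\delta(c_{22})]_{11}m_{12}$; the components $[\delta(c_{22})]_{12}$ and $[\delta(c_{22})]_{22}$ are invisible in these equations, so none of condition~(ii), right--faithfulness of $M$, or condition~(b) can be brought to bear on them from here. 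To kill $[\Delta]_{12}$ and $[\Delta]_{22}$ for $\Delta=\delta(b_1+b_2)-\delta(b_1)-\delta(b_2)$ you would need $\Delta\cdot t_2=0$ for $t_2\in\mathcal T_{22}$, which unwinds to $\delta\big((b_1+b_2)\circ t_2\big)=\delta(b_1\circ t_2)+\delta(b_2\circ t_2)$---an instance of the very $\mathcal T_{22}$--additivity you are trying to prove. As written, your sketch does not break this circularity, so the $\mathcal T_{22}$ case remains a genuine gap in the proposal.
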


    Since $\tau:\mathcal{T}  \rightarrow \mathcal{T}$ satisfies all the conditions in Theorem $3.1$ of \cite{ferreira2015jordan}, $\tau$ is additive.
    In this section, we frequently use all three conditions for the triangular ring $\mathcal{T}$, the conditions on the maps $\delta$ and $\tau$ described in Theorem \ref{der1}, and the additivity of $\tau$ without mentioning them. Note that $\delta(0)=0$. Before proving Theorem \ref{der1}, we first discuss several needful lemmas.

    \begin{lemma}
    	\label{lem1}
    	Let $a\in \mathcal{T}_{11}$, $b\in \mathcal{T}_{22}$ and $m\in \mathcal{T}_{12}$. Then
    	\begin{equation}
    		\begin{aligned}
    			&(i)~~\delta(a+m)=\delta(a)+\delta(m),\\
    			&(ii)~~\delta(b+m)=\delta(b)+\delta(m).
    		\end{aligned}
    	\end{equation}
    \end{lemma}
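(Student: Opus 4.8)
The plan is to fix the ring idempotents $e_1\in\mathcal{T}_{11}$, $e_2\in\mathcal{T}_{22}$ coming from the identities of $A$ and $B$, write $u\circ x:=ux+xu$ for the Jordan product, and for a well-chosen $u$ subtract the hypothesis on $\delta$ evaluated at $\delta(u\circ x)$ from those at $\delta(u\circ y)$ and $\delta(u\circ z)$ whenever $x=y+z$. In every such subtraction the terms $\delta(u)x$ and $x\tau(u)$ collapse because $x-y-z=0$, the terms $u\tau(x)$ collapse because $\tau$ is already known to be additive, and the left-hand side $\delta(u\circ x)-\delta(u\circ y)-\delta(u\circ z)$ vanishes because $u$ will always be chosen so that $u\circ y$, $u\circ z$, $u\circ x$ are either $0$ or one single named element (using $\delta(0)=0$); what survives is exactly $\big(\delta(y+z)-\delta(y)-\delta(z)\big)u=0$. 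Setting $c:=\delta(a+m)-\delta(a)-\delta(m)$ in (i) and $c':=\delta(b+m)-\delta(b)-\delta(m)$ in (ii), the task is to produce enough right factors $u$ to force every Peirce component of $c$, resp.\ $c'$, to be zero.

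For (i): taking $u=e_2$ and using $e_2\circ(a+m)=m$, $e_2\circ a=0$, $e_2\circ m=m$, the subtraction yields $c\,e_2=0$; since right multiplication by $e_2$ is the identity on $\mathcal{T}_{12}$ and on $\mathcal{T}_{22}$ and annihilates $\mathcal{T}_{11}$, this forces $c\in\mathcal{T}_{11}$. Then taking $u=n$ with $n$ arbitrary in $\mathcal{T}_{12}$ and using $n\circ(a+m)=an\in\mathcal{T}_{12}$, $n\circ a=an$, $n\circ m=0$, the subtraction yields $c\,n=0$ for every $n\in\mathcal{T}_{12}$. Regarding $c\in\mathcal{T}_{11}$ as an element of $A$ that annihilates $M$ on the left, faithfulness of $M$ as a left $A$-module (Definition \ref{def1}(a)) gives $c=0$, which is (i).

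For (ii): the symmetric opening move, $u=e_1$ with $e_1\circ(b+m)=m$, $e_1\circ b=0$, $e_1\circ m=m$, gives $c'\,e_1=0$, hence $c'\in\mathcal{T}_{12}\oplus\mathcal{T}_{22}$ — and here the triangular asymmetry makes the argument genuinely harder; I expect this to be the \emph{main obstacle}. The dual of the second step of (i) is empty: with $u=n\in\mathcal{T}_{12}$ one only gets $c'\,n=0$, which holds automatically because $(\mathcal{T}_{12}\oplus\mathcal{T}_{22})\mathcal{T}_{12}=0$; and the one right factor that would detect the surviving components of $c'$ is $e_2$, but $e_2\circ(b+m)=2b+m$ brings in a factor $2$ that cannot be cancelled, since $\mathcal{T}$ is not assumed $2$-torsion free. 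To get around this I would first record two preliminary facts, each provable from scratch by the same idempotent technique: putting $g:=\delta-\tau$, which satisfies the simpler identity $g(xy+yx)=g(x)y+g(y)x$ (the $\tau$-terms cancel), comparison of Peirce components in $g(e_1\circ e_2)=0$ forces $g(e_1)\in\mathcal{T}_{11}$, and then $g(e_1\circ m)=g(e_2\circ m)=g(m)$ forces $g(m)=g(e_1)\,m$ for every $m\in\mathcal{T}_{12}$; since $\tau$ is additive (and $\tau(\mathcal{T}_{12})\subseteq\mathcal{T}_{12}$ by the analogous computation for $\tau$), this gives at once $\delta(\mathcal{T}_{12})\subseteq\mathcal{T}_{12}$ and additivity of $\delta$ on $\mathcal{T}_{12}$. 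Feeding these into the relations produced by applying the hypothesis to the iterated Jordan products $\big((b+m)\circ e_2\big)\circ e_1$ and to $(b+m)\circ a$ with $a$ ranging over $\mathcal{T}_{11}$, while carefully tracking the Peirce components of $\delta(e_1)$ and $\delta(e_2)$ and invoking conditions (i)--(iii) together with the faithfulness of $M$, one should be able to pin down the $\mathcal{T}_{12}$- and $\mathcal{T}_{22}$-components of $c'$ and conclude $c'=0$. Turning that bookkeeping — neutralising the coefficient $2$ without any torsion hypothesis — into a clean argument is the part I expect to demand the most care.
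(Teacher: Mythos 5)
Your part (i) is correct and is essentially the paper's own argument: the paper right-multiplies by an arbitrary $t_2\in\mathcal{T}_{22}$ (resp.\ $n\in\mathcal{T}_{12}$) and invokes condition (b) of Definition \ref{def1} and condition (ii) of Theorem \ref{der1} where you use the unit $e_2$ and unitality of $M$, but the computation is the same.

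Part (ii), however, is not proved. You correctly isolate the obstruction: $\delta(b+m)$ only ever enters the functional equation as $\delta(b+m)y$; the only $y$ whose right action detects the $\mathcal{T}_{12}$- and $\mathcal{T}_{22}$-components of $c'=\delta(b+m)-\delta(b)-\delta(m)$ is one with nonzero $\mathcal{T}_{22}$-part $y_{22}$; and for such $y$ the Jordan product $(b+m)y+y(b+m)$ contains the sum $(by_{22}+y_{22}b)+my_{22}$ of two generally nonzero pieces whose $\delta$-value cannot be split without already knowing (ii). Your preliminary reductions are all verifiable: $g:=\delta-\tau$ does satisfy $g(xy+yx)=g(x)y+g(y)x$, one does get $g(e_1)\in\mathcal{T}_{11}$ and $g(m)=g(e_1)m$, hence additivity of $\delta$ on $\mathcal{T}_{12}$. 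But the final step is only a declaration of intent, and it is exactly where the argument closes on itself: writing $c''(b,m)=g(b+m)-g(b)-g(m)$, the relations you propose to exploit give $c''(b,m)t_2=c''(bt_2+t_2b,\,mt_2)$ and $c''(b,m)e_2=c''(2b,m)$, a recursion that never terminates and in which the factor $2$ is never neutralised; no choice of iterated Jordan product escapes this. So (ii) remains unproved. For what it is worth, the paper disposes of (ii) with the single word ``similarly,'' and the literal mirror of its proof of (i) runs into precisely the asymmetry you describe, since right multiplication by $\mathcal{T}_{11}$ or $\mathcal{T}_{12}$ only ever reaches the $\mathcal{T}_{11}$-component of $c'$. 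You have therefore located a genuine difficulty, but you have not closed it, and neither your sketch nor the paper supplies the missing argument.
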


    \begin{proof}
    	Let $t_2 \in \mathcal{T}_{22}$. Then
    	\begin{equation}
    		\label{eq:der1}
    		\begin{aligned}
    			&\delta[(a+m)t_2+t_2(a+m)]\\
    			&=\delta(a+m)t_2+(a+m)\tau(t_2)+\delta(t_2)(a+m)+t_2 \tau(a+m).
    		\end{aligned}
    	\end{equation}
    	
    	On the other hand,
    	\begin{equation}
    		\label{eq:der2}
    		\begin{aligned}
    			&\delta[(a+m)t_2+t_2(a+m)]\\
    			&=\delta(mt_2)=\delta(0)+\delta(mt_2+0)\\
    			&=\delta(a t_2+t_2 a)+\delta(mt_2+t_2 m)\\
    			&=\delta(a)t_2+a \tau(t_2)+\delta(t_2)a+t_2 \tau(a)\\
    			&+\delta(m)t_2+m \tau(t_2)+\delta(t_2)m+t_2 \tau(m).
    		\end{aligned}
    	\end{equation}
    	
    	Since $\tau$ is additive, comparing \eqref{eq:der1} and \eqref{eq:der2}, we have
    	\begin{equation}
    		\begin{aligned}
    			&[\delta(a+m)-\delta(a)-\delta(m)]t_2=0\\
    			& \implies [\delta(a+m)-\delta(a)-\delta(m)]_{12}t_2=0\\
    			& \text{and}~~ [\delta(a+m)-\delta(a)-\delta(m)]_{22}t_2=0.
    		\end{aligned}
    	\end{equation}
    	
    	Using conditions (b) and (ii) of Definition \ref{def1} and Theorem \ref{der1}, respectively, we have
    	
    	\begin{equation}
    		\begin{aligned}
    			& [\delta(a+m)-\delta(a)-\delta(m)]_{12}=0\\
    			& \text{and}~~ [\delta(a+m)-\delta(a)-\delta(m)]_{22}=0.
    		\end{aligned}
    	\end{equation}
    	
    	Now, in order to prove $[\delta(a+m)-\delta(a)-\delta(m)]_{11}=0$, let $n\in \mathcal{T}_{12}$. Then
    	\begin{equation}
    		\label{eq:der3}
    		\begin{aligned}
    			& \delta[(a+m)n+n(a+m)]=\delta(an)\\
    			&=\delta(an+na)+\delta(mn+nm)\\
    			&=\delta(a)n +a\tau(n) +\delta(n)a +n\tau(a) +\delta(m)n +m\tau(n)+ \delta(n)m +n\tau(m).\\
    		\end{aligned}
    	\end{equation}
    	
    	Also, we have
    	\begin{equation}
    		\label{eq:3}
    		\begin{aligned}
    			& \delta[(a+m)n+n(a+m)]\\
    			& = \delta(a+m)n+ (a+m)\tau(n)+\delta(n)(a+m)+n\tau(a+m)\\
    			&=\delta(a+m)n+ (a+m)\tau(n)+\delta(n)(a+m)+n(\tau(a)+\tau(m))~(\text{By additivity of}~\tau).
    		\end{aligned}
    	\end{equation}
    	
    	Comparing the equalities \eqref{eq:der3} and \eqref{eq:3},
    	
    	\begin{equation}
    		\begin{aligned}
    			&  [\delta(a+m)-\delta(a)-\delta(m)]_{11}n=0\\
    			&\implies [\delta(a+m)-\delta(a)-\delta(m)]_{11}=0~(\text{by condition (a) of Definition \ref{def1}}).
    		\end{aligned}
    	\end{equation}
    	
    	Hence, $\delta(a+m)=\delta(a)+\delta(m)$. Similarly, we can prove that $\delta(b+m)=\delta(b) + \delta(m)$.
    \end{proof}

    \begin{lemma}
    	\label{lem2}
    	Let $a \in \mathcal{T}_{11}$, $b\in \mathcal{T}_{22}$ and $m,n \in \mathcal{T}_{12}$. Then
    	\begin{equation}
    		\delta(am+nb)=\delta(am)+\delta(nb).
    	\end{equation}
    \end{lemma}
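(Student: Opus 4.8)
The plan is to realize $am+nb$ as a single Jordan product of two elements of $\mathcal{T}$ and then apply the defining relation for $\delta$, reducing additivity to the already-established Lemma~\ref{lem1} and the additivity of $\tau$. First I would observe that, since $a\in\mathcal{T}_{11}$, $b\in\mathcal{T}_{22}$ and $m,n\in\mathcal{T}_{12}$, the vanishing rule $t_{ij}t_{kl}=0$ for $j\neq k$ gives $ab=ba=0$, $mn=nm=0$, $ma=bn=0$, and also $(a+n)(m+b)=am+nb$ together with $(m+b)(a+n)=0$. Hence
\begin{equation*}
(a+n)(m+b)+(m+b)(a+n)=am+nb ,
\end{equation*}
and moreover $am,nb\in\mathcal{T}_{12}$.

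Next I would apply the functional equation for $\delta$ to the left-hand side, obtaining
\begin{equation*}
\delta(am+nb)=\delta(a+n)(m+b)+(a+n)\tau(m+b)+\delta(m+b)(a+n)+(m+b)\tau(a+n) .
\end{equation*}
Then I would split $\tau(m+b)=\tau(m)+\tau(b)$ and $\tau(a+n)=\tau(a)+\tau(n)$ by additivity of $\tau$, and split $\delta(a+n)=\delta(a)+\delta(n)$ and $\delta(m+b)=\delta(b)+\delta(m)$ by Lemma~\ref{lem1}. Expanding gives sixteen terms. Eight of them are exactly $\delta(a)m+a\tau(m)+\delta(m)a+m\tau(a)$ and $\delta(n)b+n\tau(b)+\delta(b)n+b\tau(n)$, which equal $\delta(am)$ and $\delta(nb)$ respectively (using $ma=bn=0$, so that $\delta(am)=\delta(am+ma)$ and $\delta(nb)=\delta(nb+bn)$, and then the defining equation for $\delta$). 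The remaining eight terms regroup as
\begin{equation*}
\big[\delta(a)b+a\tau(b)+\delta(b)a+b\tau(a)\big]+\big[\delta(n)m+n\tau(m)+\delta(m)n+m\tau(n)\big]=\delta(ab+ba)+\delta(mn+nm)=0 ,
\end{equation*}
since $ab=ba=0$, $mn=nm=0$ and $\delta(0)=0$. Combining these, $\delta(am+nb)=\delta(am)+\delta(nb)$.

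I do not expect a genuine obstacle here; the only real idea is spotting the identity $(a+n)(m+b)+(m+b)(a+n)=am+nb$, after which the argument is pure bookkeeping. The one point requiring care is sorting the sixteen expanded terms correctly into the eight that reconstruct $\delta(am)+\delta(nb)$ and the eight cross terms, and recognizing that the latter vanish precisely because they assemble into $\delta$ of the zero elements $ab+ba$ and $mn+nm$. Note that, unlike some of the other lemmas, this step uses neither the torsion-freeness of $\mathcal{T}$ nor the existence of identities in $A$ and $B$.
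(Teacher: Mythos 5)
Your proof is correct and follows exactly the same route as the paper: the key identity $am+nb=(a+n)(m+b)+(m+b)(a+n)$, followed by the defining relation for $\delta$, Lemma~\ref{lem1}, additivity of $\tau$, and regrouping the expanded terms into $\delta(am+ma)+\delta(nb+bn)+\delta(ab+ba)+\delta(mn+nm)$ with the last two vanishing. Your write-up merely makes the sixteen-term bookkeeping more explicit than the paper does.
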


    \begin{proof}
    	Since $am + nb = (a+n)(m+b) + (m+b)(a+n)$, by Lemma \ref{lem1} and the additivity of $\tau$, we have
    	
    	\begin{equation}
    		\begin{aligned}
    			\delta(am+nb)
    			&= \delta[(a+n)(m+b) + (m+b)(a+n)]\\
    			&=\delta(a+n)(m+b) + (a+n)\tau(m+b)\\
    			& +\delta(m+b)(a+n) + (m+b)\tau(a+n)\\
    			&=(\delta(a)+ \delta(n))(m+b) +(a+n)(\tau(m)+\tau(b))\\
    			& +(\delta(m)+\delta(b))(a+n) + (m+b)(\tau(a)+\tau(n))\\
    			&=\delta(am+ma) + \delta(nb+bn) + \delta(mn+nm) + \delta(ab+ba)\\
    			&= \delta(am) +\delta(nb).
    		\end{aligned}
    	\end{equation}
    \end{proof}

    \begin{lemma}
    	\label{lem3}
    	Let $m,n \in \mathcal{T}_{12}$. Then
    	\begin{equation}
    		\delta(m+n) = \delta(m)+\delta(n).
    	\end{equation}
    \end{lemma}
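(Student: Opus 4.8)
The plan is to use the standard trick of realizing the sum $m+n$ of two elements of $\mathcal{T}_{12}$ as a value of a Jordan product of elements that mix the diagonal corners, so that the generalized Jordan identity together with the already-established partial additivity (Lemmas \ref{lem1} and \ref{lem2}) collapses to exactly the relation we want. Since $A$ and $B$ have identities $1_A$ and $1_B$, write $e = \left(\begin{smallmatrix} 1_A & 0 \\ & 0 \end{smallmatrix}\right)$; for $m,n \in \mathcal{T}_{12}$ we have $em = m$, $me = 0$, $en = n$, $ne = 0$, so the obvious identity to exploit is something like $m + n = e\,m + n\,(\text{something in }\mathcal{T}_{22})$ or, more symmetrically, an expression of the form $(e + n')(\cdots) + (\cdots)(e+n')$ whose expansion produces $m+n$ in the off-diagonal slot and known additive pieces elsewhere. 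Concretely I would look at $(e + n)(m + 1_B') + (m + 1_B')(e+n)$ where $1_B' = \left(\begin{smallmatrix} 0 & 0 \\ & 1_B \end{smallmatrix}\right)$: expanding gives $em + e1_B' + nm + n1_B' + me + m e + 1_B' e + 1_B' n = m + 0 + 0 + n + 0 + \cdots$, and one checks the cross terms all vanish or lie in a single corner, so the product equals $m+n$ up to controllable summands.

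First I would fix the precise algebraic identity so that the left side is $\delta(m+n)$ exactly (adjusting the auxiliary idempotent/identity elements as needed so stray terms cancel). Then I would apply the generalized Jordan hypothesis to expand $\delta$ of that Jordan product into four terms $\delta(x)y + x\tau(y) + \delta(y)x + y\tau(x)$. Next, I would apply Lemma \ref{lem1} to split $\delta$ on the two mixed arguments $x = e+n$ and $y = m + 1_B'$ (each a sum of a diagonal corner element and a $\mathcal{T}_{12}$ element), and use additivity of $\tau$ throughout. After multiplying out and regrouping, the right-hand side should reorganize as $\delta$ of several pure Jordan products plus $\delta(m) + \delta(n)$, exactly as in the proof of Lemma \ref{lem2}; invoking the generalized Jordan identity backwards on those pure products (together with $\delta(0)=0$ and $\delta$ of products like $en + ne$, $1_B' m + m 1_B'$, etc., which collapse by Lemmas \ref{lem1} and \ref{lem2}) leaves $\delta(m+n) = \delta(m) + \delta(n)$.

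The main obstacle I anticipate is purely bookkeeping: choosing the auxiliary elements so that \emph{every} term in the expansion is either $m$, $n$, $0$, or a product already handled by Lemmas \ref{lem1}–\ref{lem2}, with no residual term of the form $\delta(mn'+\cdots)$ that isn't known to be additive. In a triangular ring $\mathcal{T}_{12}\cdot\mathcal{T}_{12} = 0$, so products of two off-diagonal elements vanish automatically, which is what makes this work; the care needed is only in tracking which corner each surviving term lands in and confirming the $\tau$-terms (which carry the same structure by the identical hypothesis on $\tau$ and its known additivity) cancel in parallel. I expect no torsion assumption is needed here, consistent with the fact that additivity (as opposed to the Jordan derivation conclusion) is claimed without $2$-torsion-freeness.
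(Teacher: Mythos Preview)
Your approach would work, but it is far more laborious than necessary: you are essentially reproving Lemma~\ref{lem2} from scratch with the special choices $a=e=\left(\begin{smallmatrix}1_A&0\\&0\end{smallmatrix}\right)$ and $b=1_B'=\left(\begin{smallmatrix}0&0\\&1_B\end{smallmatrix}\right)$, rather than simply \emph{invoking} Lemma~\ref{lem2}. The paper's proof is a one-liner: since $A$ and $B$ have identities (condition~(iii) of Theorem~\ref{der1}), just set $a=1_A\in\mathcal{T}_{11}$ and $b=1_B\in\mathcal{T}_{22}$ in Lemma~\ref{lem2}. Then $am=m$ and $nb=n$, so Lemma~\ref{lem2} reads $\delta(m+n)=\delta(m)+\delta(n)$ directly.

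Your Jordan-product expansion with $x=e+n$, $y=m+1_B'$ is exactly the identity $(a+n)(m+b)+(m+b)(a+n)=am+nb$ that was already used to \emph{prove} Lemma~\ref{lem2}; you would be redoing that computation and then specializing, rather than specializing the conclusion. No new idea is needed, and the bookkeeping obstacle you anticipate simply does not arise once you realize Lemma~\ref{lem2} already packages the whole computation.
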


    \begin{proof}
    	Since we assume both $A$ and $B$ are rings with identity in	Theorem \ref{der1}, putting $a=1$ and $b=1$ in Lemma \ref{lem2}, we have the desired result.
    \end{proof}

    \begin{lemma}
    	\label{lem4}
    	Let $a_{1}, a_{2}, \in \mathcal{T}_{11}$ and $b_{1}, b_{2}, \in \mathcal{T}_{22}$. Then
    	\begin{equation}
    		\begin{aligned}
    			& (i)~ \delta(a_{1}+a_{2}) = \delta(a_{1})+\delta(a_{2}),\\
    			& (ii)~\delta(b_{1}+b_{2}) = \delta(b_{1})+\delta(b_{2}).
    		\end{aligned}
    	\end{equation}
    \end{lemma}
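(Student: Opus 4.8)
The plan is to show the additivity defect vanishes by decomposing it into its three Peirce components and annihilating each with a well‑chosen product, in the spirit of the proofs of Lemmas \ref{lem1}--\ref{lem3}. I give the argument for part (i); part (ii) is handled by the analogous computations (probing $\mathcal{T}_{22}$ against $\mathcal{T}_{12}$ and against $\mathcal{T}_{11}$, and using the identity $1_B$ of $B$ where (i) uses $1_A$), and I indicate below where I expect the difficulty to concentrate.

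Fix $a_1,a_2\in\mathcal{T}_{11}$ and put $X:=\delta(a_1+a_2)-\delta(a_1)-\delta(a_2)$, written $X=X_{11}+X_{12}+X_{22}$ with $X_{ij}\in\mathcal{T}_{ij}$. First I would kill $X_{11}$: for $m\in\mathcal{T}_{12}$ we have $ma_i=0$, so $(a_1+a_2)m+m(a_1+a_2)=a_1m+a_2m\in\mathcal{T}_{12}$; applying $\delta$ and expanding the left side by the defining identity (using additivity of $\tau$), and the right side by Lemma \ref{lem3} together with $\delta(a_im)=\delta(a_im+ma_i)=\delta(a_i)m+a_i\tau(m)+\delta(m)a_i+m\tau(a_i)$, everything cancels except $Xm=0$; since $X_{12},X_{22}$ annihilate $\mathcal{T}_{12}$ on the left, $X_{11}m=0$ for all $m$, so $X_{11}=0$ by condition (a) of Definition \ref{def1}. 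Next I would kill $X_{12}$ and $X_{22}$ together: for $t_2\in\mathcal{T}_{22}$ we have $a_it_2=t_2a_i=0$, so $a_it_2+t_2a_i=0$ and also $(a_1+a_2)t_2+t_2(a_1+a_2)=0$; feeding these into the defining identity ($\delta(0)=0$), summing the two $a_i$‑identities and subtracting, and using additivity of $\tau$, one is left with $Xt_2=0$; since $X_{11}t_2=0$ automatically, this forces $X_{12}t_2=0$ and $X_{22}t_2=0$ for every $t_2\in\mathcal{T}_{22}$, whence $X_{22}=0$ by condition (ii) of Theorem \ref{der1} and $X_{12}=0$ by condition (b) of Definition \ref{def1}. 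Thus $X=0$, which is (i).

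Part (ii) follows the same template: probing $b_1,b_2\in\mathcal{T}_{22}$ against $m\in\mathcal{T}_{12}$ (where now $b_im=0$, $mb_i\in\mathcal{T}_{12}$) and against $t_1\in\mathcal{T}_{11}$ (total annihilation $b_it_1+t_1b_i=0$) pins down the $\mathcal{T}_{11}$‑component of the defect, while a further probe by the identity $1_B$ of $B$, combined with the additivity already established on $\mathcal{T}_{12}$ (Lemma \ref{lem3}) and with Lemma \ref{lem1}(ii), is needed to pin down the $\mathcal{T}_{12}$‑ and $\mathcal{T}_{22}$‑components. The main obstacle, and the step I would be most careful about, is exactly this last one: for diagonal elements the "opposite‑corner" probes (against $\mathcal{T}_{22}$ in part (i), against $\mathcal{T}_{11}$ in part (ii)) are the ones that vanish identically and thus give clean information, whereas the "same‑corner" products $b_it_2+t_2b_i$ do not vanish, so the naïve dual of the second step only yields a relation that must be disentangled with the help of Lemmas \ref{lem1}--\ref{lem3} and the faithfulness hypotheses. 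Getting the bookkeeping of Peirce components right — and invoking exactly the correct faithfulness/torsion condition for each component — is where the real work of the proof lies.
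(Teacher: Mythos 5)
Your argument for part (i) is correct and is essentially the paper's own proof with the two steps interchanged: the probe by $t_2\in\mathcal{T}_{22}$, where every Jordan product vanishes, kills $X_{12}$ and $X_{22}$ via condition (b) of Definition \ref{def1} and condition (ii) of Theorem \ref{der1}, and the probe by $m\in\mathcal{T}_{12}$ together with Lemma \ref{lem3} kills $X_{11}$ via condition (a).

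The gap is in part (ii), precisely where you yourself locate the difficulty --- but you do not close it. In the identity $\delta(ab+ba)=\delta(a)b+a\tau(b)+\delta(b)a+b\tau(a)$ the map $\delta$ is only ever multiplied \emph{on the right} by the other argument, so every probe yields a relation of the form $Ys=0$ with $Y:=\delta(b_1+b_2)-\delta(b_1)-\delta(b_2)$. Since $\mathcal{T}_{12}\mathcal{T}_{11}=\mathcal{T}_{22}\mathcal{T}_{11}=\mathcal{T}_{12}\mathcal{T}_{12}=\mathcal{T}_{22}\mathcal{T}_{12}=0$, the relations $Yt_1=0$ and $Ym=0$ coming from probes in $\mathcal{T}_{11}$ and $\mathcal{T}_{12}$ see only $Y_{11}$; the components $Y_{12}$ and $Y_{22}$ are reached only by a probe with nonzero $\mathcal{T}_{22}$ part, and for such a probe the products $b_it_2+t_2b_i$ lie in $\mathcal{T}_{22}$ and do not vanish, so splitting $\delta\bigl((b_1t_2+t_2b_1)+(b_2t_2+t_2b_2)\bigr)$ is an instance of the very additivity on $\mathcal{T}_{22}$ being proved. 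Your proposed probe by the idempotent $e_2\in\mathcal{T}_{22}$ corresponding to $1_B$ does not escape this: using additivity of $\tau$ it gives
\begin{equation*}
\delta(2b_1+2b_2)-\delta(2b_1)-\delta(2b_2)=\bigl[\delta(b_1+b_2)-\delta(b_1)-\delta(b_2)\bigr]e_2=Ye_2,
\end{equation*}
which merely transfers the unknown defect to the pair $(2b_1,2b_2)$ instead of annihilating it, and Lemmas \ref{lem1}--\ref{lem3} only split off summands lying in $\mathcal{T}_{12}$, never two summands from $\mathcal{T}_{22}$. So $Y_{12}=Y_{22}=0$ is not established. To be fair, the paper itself disposes of (ii) with the single phrase ``in a similar way,'' and your analysis shows the situation is genuinely not symmetric (unlike Ferreira's identity for $\tau$, which is left--right symmetric in $\tau$); you have put your finger on a real weak point of the paper, but as a proof, part (ii) still needs a new idea.
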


    \begin{proof}
    	Let $t_{2} \in \mathcal{T}_{22}$. Then
    	\begin{equation}
    		\label{eq:der4}
    		\begin{aligned}
    			&\delta(a_{1}+a_{2})t_{2} + (a_{1}+a_{2})\tau (t_{2}) +  \delta(t_{2})(a_{1}+a_{2})+t_{2}\tau(a_{1}+a_{2})\\
    			&=\delta[(a_{1}+a_{2})t_{2}+t_{2}(a_{1}+a_{2})]=\delta(0)=0\\
    			&=\delta(a_{1}t_{2}+t_{2}a_{1}) + \delta(a_{2}t_{2}+t_{2}a_{2})\\
    			&=\delta(a_{1})t_{2} +a_{1} \tau(t_{2}) + \delta(t_{2})a_{1} +t_{2}\tau(a_{1})\\
    			&+\delta(a_{2})t_{2} +a_{2}\tau(t_{2}) +\delta(t_{2})a_{2} + t_{2}\tau(a_{2}).
    		\end{aligned}
    	\end{equation}
    	
    	Since $\tau$ is additive, from \eqref{eq:der4}, we have
    	\begin{equation}
    		\begin{aligned}
    			&[\delta(a_{1}+a_{2}) - \delta(a_{1}) - \delta(a_{2})]t_{2}=0\\
    			&\implies [\delta(a_{1}+a_{2})-\delta(a_{1})-\delta(a_2)]_{12}t_{2}=0\\
    			& \text{and}~~ [\delta(a_{1}+a_{2})-\delta(a_{1})-\delta(a_{2})]_{22}t_{2}=0.
    		\end{aligned}
    	\end{equation}

    	Using conditions $(b)$ and $(ii)$ of Definition \ref{def1} and Theorem \ref{der1}, respectively, we have
    	\begin{equation}
    		\begin{aligned}
    			& [\delta(a_{1}+a_{2})-\delta(a_{1})-\delta(a_{2})]_{12}=0\\
    			& \text{and}~~ [\delta(a_{1}+a_{2})-\delta(a_{1})-\delta(a_{2})]_{22}=0.
    		\end{aligned}
    	\end{equation}
    	
    	Now, to prove $[\delta(a_{1}+a_{2})-\delta(a_{1})-\delta(a_{2})]_{11}=0$, let $m\in \mathcal{T}_{12}$. Then
    	\begin{equation}
    		\label{eq:der5}
    		\begin{aligned}
    			&\delta(a_{1}+a_{2})m + (a_{1}+a_{2})\tau(m) +\delta(m)(a_{1}+a_{2}) +m\tau(a_{1}+a_{2})\\
    			&= \delta[(a_{1}+a_{2})m + m(a_{1}+a_{2})]\\
    			&=\delta[(a_{1}m+ma_{1}) + (a_{2}m+ma_{2})]\\
    			&=\delta(a_{1}m+ma_{1}) + \delta(a_{2}m+ma_{2}) ~(\text{By Lemma \ref{lem3}})\\
    			&=\delta(a_{1})m + a_{1}\tau(m) + \delta(m)a_{1} + m\tau(a_{1}) + \delta(a_{2})m +  a_{2}\tau(m) + \delta(m)a_{2} +m\tau(a_{2}).\\
    		\end{aligned}
    	\end{equation}

    	By \eqref{eq:der5} and the additivity of $\tau$, we see that
    	\begin{equation}
    		\begin{aligned}
    			&  [\delta(a_{1}+a_{2})-\delta(a_1)-\delta(a_{2})]m=0\\
    			&\implies [\delta(a_{1}+a_{2})-\delta(a_1)-\delta(a_{2})]_{11} m=0\\
    			&\implies [\delta(a_{1}+a_{2})-\delta(a_1)-\delta(a_{2})]_{11}=0~(\text{by condition (a) of Definition \ref{def1}}).
    		\end{aligned}
    	\end{equation}
    	
    	Hence, $\delta(a_{1}+a_{2}) = \delta(a_{1})+\delta(a_{2})$. In a similar way, we can prove that $\delta(b_{1}+b_{2}) = \delta(b_{1})+\delta(b_{2}).$
    \end{proof}

    \begin{lemma}
    	\label{lem5}
    	Let $a\in \mathcal{T}_{11}$, $b \in \mathcal{T}_{22}$ and $m \in \mathcal{T}_{12}$. Then
    	\begin{equation}
    		\delta(a+m+b) = \delta(a)+\delta(m) +\delta(b).
    	\end{equation}
    \end{lemma}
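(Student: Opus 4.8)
The plan is to show that $z := \delta(a+m+b) - \delta(a) - \delta(m) - \delta(b)$ vanishes by checking its three components $z_{11}, z_{12}, z_{22}$ separately, in exactly the spirit of Lemmas \ref{lem1} and \ref{lem4}. Write $x = a+m+b$.

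First I would multiply $x$ by an arbitrary $t_{2}\in\mathcal{T}_{22}$ on both sides. Since $at_{2}=t_{2}a=t_{2}m=0$, one has $xt_{2}+t_{2}x=(at_{2}+t_{2}a)+(mt_{2}+t_{2}m)+(bt_{2}+t_{2}b)$, where the last two summands lie in $\mathcal{T}_{12}$ and $\mathcal{T}_{22}$ respectively; hence Lemma \ref{lem1}(ii) lets me write $\delta(xt_{2}+t_{2}x)=\delta(mt_{2}+t_{2}m)+\delta(bt_{2}+t_{2}b)$, the $a$-term contributing $\delta(0)=0$. Expanding each piece with the defining identity for $\delta$ and comparing with the expansion $\delta(xt_{2}+t_{2}x)=\delta(x)t_{2}+x\tau(t_{2})+\delta(t_{2})x+t_{2}\tau(x)$, while using the additivity of $\tau$, every term cancels except $z\,t_{2}=0$. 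As in the earlier lemmas, decomposing $z\,t_{2}=z_{12}t_{2}+z_{22}t_{2}=0$ and invoking condition (b) of Definition \ref{def1} together with condition (ii) of Theorem \ref{der1} yields $z_{12}=z_{22}=0$.

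For the remaining component $z_{11}$, I would instead multiply $x$ by an arbitrary $n\in\mathcal{T}_{12}$ on both sides. Here $mn=bn=na=nm=0$, so $xn+nx=an+nb$ with $an,nb\in\mathcal{T}_{12}$; Lemma \ref{lem2} gives $\delta(an+nb)=\delta(an)+\delta(nb)$, and since $na=bn=mn=nm=0$ I may rewrite $\delta(an)=\delta(an+na)$, $\delta(nb)=\delta(nb+bn)$ and also insert the vanishing term $\delta(mn+nm)=0$. Expanding all of these with the $\delta$-identity and comparing with $\delta(xn+nx)=\delta(x)n+x\tau(n)+\delta(n)x+n\tau(x)$, again every term cancels except $z\,n=0$, hence $z_{11}n=0$ for all $n\in\mathcal{T}_{12}$, and condition (a) of Definition \ref{def1} forces $z_{11}=0$. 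Combining the two steps gives $\delta(a+m+b)=\delta(a)+\delta(m)+\delta(b)$.

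I do not expect a genuine obstacle here: the argument is a direct adaptation of the template used in Lemmas \ref{lem1} and \ref{lem4}, the only new ingredient being careful bookkeeping of which products among $a,m,b$ and the test element vanish. The mildly delicate point is choosing the two test elements ($t_{2}\in\mathcal{T}_{22}$ to pin down the $(1,2)$ and $(2,2)$ entries, $n\in\mathcal{T}_{12}$ to pin down the $(1,1)$ entry) so that the previously established additivity facts (Lemmas \ref{lem1} and \ref{lem2}) apply to the resulting sums; once that is arranged, the computation is purely mechanical.
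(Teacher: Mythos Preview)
Your proposal is correct and follows essentially the same template as the paper: set $z=\delta(a+m+b)-\delta(a)-\delta(m)-\delta(b)$, test with $t_{2}\in\mathcal{T}_{22}$ to kill $z_{12}$ and $z_{22}$, then test with a second element to kill $z_{11}$. The only difference is that for the $(1,1)$ component the paper tests with $t_{1}\in\mathcal{T}_{11}$ (splitting $\delta(at_{1}+t_{1}a+t_{1}m)$ via Lemma~\ref{lem1}(i) and concluding by condition~(i) of Theorem~\ref{der1}), whereas you test with $n\in\mathcal{T}_{12}$ (splitting $\delta(an+nb)$ via Lemma~\ref{lem2} and concluding by condition~(a) of Definition~\ref{def1}); both choices work and the rest of the argument is identical.
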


    \begin{proof}
    	Let $t_{2} \in \mathcal{T}_{22}$. Then
    	\begin{equation}
    		\begin{aligned}
    			&\delta(a+m+b)t_2 + (a+m+b)\tau(t_2) + \delta(t_2)(a+m+b)+ t_2\tau(a+m+b)\\
    			&=\delta[(a+m+b)t_2+t_2(a+m+b)]\\
    			&=\delta(mt_2+bt_2+t_2b)\\
    			&=\delta(bt_2+t_2b) + \delta(mt_2)~(\text{By Lemma \ref{lem1}})\\
    			&=\delta(bt_2+t_2b) + \delta(t_2m+mt_2) + \delta(at_2+t_2a)\\
    			&=\delta(b)t_2+ b\tau(t_2) + \delta(t_2)b + t_2\tau(b) + \delta(t_2)m + t_2 \tau(m) \\
    			&+ \delta(m)t_2 +m\tau(t_2) + \delta(a)t_2 + a\tau(t_2 ) + \delta(t_2)a + t_2 \tau(a)\\
    			& \implies [\delta(a+m+b) - \delta(a) - \delta(m) - \delta(b)]t_2 =0 ~(\text{Using additivity of}~ \tau)\\
    			& \implies [\delta(a+m+b) - \delta(a) - \delta(m) - \delta(b)]_{12}t_2 =0\\
    			& \text{and}~[\delta(a+m+b) - \delta(a) - \delta(m) - \delta(b)]_{22}t_2 =0.
    		\end{aligned}
    	\end{equation}	
    	
    	Using conditions $(b)$ and $(ii)$ of Definition \ref{def1} and Theorem \ref{der1}, we have
    	\begin{equation}
    		\label{eq:6}
    		\begin{aligned}
    			& [\delta(a+m+b) - \delta(a) - \delta(m) - \delta(b)]_{12}=0\\
    			& \text{and}~[\delta(a+m+b) - \delta(a) - \delta(m) - \delta(b)]_{22}=0.
    		\end{aligned}
    	\end{equation}

    	Let $t_1 \in \mathcal{T}_{11}$. Then
    	\begin{equation}
    		\label{eq:7}
    		\begin{aligned}
    			&\delta(a+m+b)t_1 + (a+m+b)\tau(t_1) + \delta(t_1)(a+m+b)+ t_1\tau(a+m+b)\\
    			&=\delta[(a+m+b)t_1+t_1(a+m+b)]\\
    			&=\delta(at_1+t_1a+t_1 m)\\
    			&=\delta(at_1+t_1 a) + \delta(t_1m)~(\text{By Lemma \ref{lem1}})\\
    			&=\delta(a t_1+t_1 a) + \delta(t_1 m+mt_1) + \delta(bt_1+t_1 b)\\
    			&=\delta(a)t_1+ a\tau(t_1) + \delta(t_1)a + t_1 \tau(a) + \delta(t_1)m + t_1 \tau(m)\\
    			&+ \delta(m)t_1 + m\tau(t_1)  + \delta(b) t_1 + b\tau(t_1) +  \delta(t_1)b + t_1 \tau(b)\\
    			&\implies [\delta(a+m+b) - \delta(a) - \delta(m) - \delta(b)]t_1 =0\\
    			& \implies [\delta(a+m+b) - \delta(a) - \delta(m) - \delta(b)]_{11}t_1 =0\\
    			& \implies [\delta(a+m+b) - \delta(a) - \delta(m) - \delta(b)]_{11}=0 \\
    			& (\text{By condition (i) of Theorem \ref{der1}}).
    		\end{aligned}
    	\end{equation}
    	
    	Hence, by \eqref{eq:6} and \eqref{eq:7}, $\delta(a+m+b) = \delta(a) + \delta(m) + \delta(b)$.
    \end{proof}

    Now, we are ready to prove Theorem \ref{der1}.

    \begin{proof}[Proof of Theorem \ref{der1}]
    	Let $t \in \mathcal{T}$ and $u \in \mathcal{T}$. Then $t = t_{11} + t_{12} + t_{22}$ and $u = u_{11}+ u_{12} + u_{22}$ where $t_{ij}, u_{ij}\in \mathcal{T}_{ij}$ and $i,~j \in \{1,2\}$. Now,
    	\begin{equation}
    		\begin{aligned}
    			\delta(t+u) &= \delta((t_{11} + t_{12} + t_{22}) + (u_{11} + u_{12} + u_{22}))\\
    			&=\delta((t_{11} + u_{11}) + (t_{12} + + u_{12}) + (t_{22} + u_{22}))\\
    			&=\delta(t_{11} + u_{11}) + \delta(t_{12} + u_{12})  + \delta(t_{22} + u_{22})~(\text{By Lemma \ref{lem5}})\\
    			&=\delta(t_{11}) + \delta(u_{11}) + \delta(t_{12}) + \delta(u_{12}) + \delta(t_{22}) +\delta(u_{22})\\
    			&~(\text{By Lemma \ref{lem3} and \ref{lem4}})\\
    			&=\delta(t_{11}) + \delta(t_{12})+ \delta(t_{22}) + \delta(u_{11})  + \delta(u_{12}) +\delta(u_{22})\\
    			&=\delta(t_{11}+ t_{12} + t_{22}) + \delta(u_{11}+ u_{12} + u_{22})~(\text{By Lemma \ref{lem5}}) \\
    			&=\delta(t) + \delta(u).\\
    		\end{aligned}
    	\end{equation}

    	Therefore, $\delta$ is additive.\\
    	
    	Let $\mathcal{T}$ be $2$-torsion free. By Theorem $3.1$ of \cite{ferreira2015jordan}, $\tau$ is a Jordan derivation. For any $t \in \mathcal{T}$,
    	\begin{equation}
    		\begin{aligned}
    			&2\delta(t^2)= \delta(2t^2) = \delta(tt +tt)
    			=2[\delta(t)t + t\tau(t)]\\
    			&\implies \delta(t^2)=\delta(t)t + t\tau(t).
    		\end{aligned}
    	\end{equation}
    	
    	Thus, $\delta$ is a generalized Jordan derivation.
    \end{proof}

    The following example shows that dropping condition $(iii)$ of Theorem \ref{def1}
    does not make any difference in the conclusion of this theorem.

    \begin{ex}
    	\label{ex1}
    	Let $\mathcal{T}=\text{Tri}(2\mathbb{Z},\mathbb{Z},3\mathbb{Z})=  \Biggl\{\left( \begin{array}{ccc}
    		a & m \\
    		& b \end{array}  \right)\vert \enspace a \in 2\mathbb{Z},\, b \in 3\mathbb{Z},\, m \in \mathbb{Z}    \Biggl\}$ where $2\mathbb{Z}$ is the ring of even integers, $3\mathbb{Z}$ is the ring of integers multiplied by $3$ and $\mathbb{Z}$ is the $(2\mathbb{Z},3\mathbb{Z})$-bimodule of integers. Note that $2\mathbb{Z}$ and $3\mathbb{Z}$ are rings without identity. Define $\tau : \mathcal{T}  \rightarrow \mathcal{T}$ as
    	\begin{align*}
    		\tau  \left( \begin{array}{ccc}
    			a & m \\
    			& b \end{array}  \right)= \left( \begin{array}{ccc}
    			0 & \alpha a+\beta m-\alpha b \\
    			& 0 \end{array}  \right),
    	\end{align*}
    	where $\alpha, \beta \in \mathbb{Z}$. Then it is easy to check that $\tau$ satisfies all the assumptions of Theorem $3.1$ of \cite{ferreira2015jordan}. Hence, $\tau$ is additive and also a Jordan derivation. Define $\delta:\mathcal{T} \rightarrow \mathcal{T}$ as
    	\begin{align*}
    		\delta  \left( \begin{array}{ccc}
    			a & m \\
    			& b \end{array} \right )= \left( \begin{array}{ccc}
    			a \gamma & \alpha a+(\beta+\gamma) m+\mu b \\
    			& wb \end{array}  \right),
    	\end{align*}
    	where $\gamma, \mu$ and  $w \in \mathbb{Z}$. Then it is a routine exercise to check that $\delta$ satisfies all the conditions except (iii) of Theorem \ref{der1} with the associated map $\tau$ just mentioned. Also, we can quickly check that $\delta$ is additive and hence a generalized Jordan derivation.
    \end{ex}

    Example \ref{ex1} motivates us to post Theorem \ref{der1} without assuming condition $(iii)$ as a conjecture.

    \begin{conj}
    	Let $\mathcal{T}$ be a triangular ring with
    	conditions:\\
    	
    	(i) $aA=0$ for some $a\in A$ implies $a=0$;
    	
    	(ii) $bB=0$ for some $b\in B$ implies $b=0$. \\

    	If a map $\delta:\mathcal{T}  \rightarrow \mathcal{T}$ satisfies
    	\begin{align*}
    		\delta(ab+ba)=\delta(a)b+a \tau(b)+\delta(b)a+b\tau(a),
    	\end{align*}
    	
    	for all $a,b\in \mathcal{T}$ where $\tau:\mathcal{T} \rightarrow \mathcal{T}$ satisfies
    	\begin{align*}
    		\tau(ab+ba)=\tau(a)b+a \tau(b)+\tau(b)a+b\tau(a),
    	\end{align*}

    	for all $a,b\in \mathcal{T}$, then $\delta$ is additive. Moreover, if $\mathcal{T}$ is $2$-torsion free, then $\delta$ is a generalized Jordan derivation.
    \end{conj}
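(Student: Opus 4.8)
The plan is to locate the single use of hypothesis (iii) in the proof of Theorem \ref{der1} and replace that step by an argument needing no identities. Tracing dependencies: Ferreira's theorem gives additivity of $\tau$ from (i) and (ii) alone; Lemma \ref{lem1}, Lemma \ref{lem2}, Lemma \ref{lem5}, and the final patching in the proof of Theorem \ref{der1} invoke only (i), (ii), (a), (b); and Lemma \ref{lem4} rests solely on Lemma \ref{lem3}. The lone genuine appeal to (iii) is in Lemma \ref{lem3}, where one substitutes $a=1$ and $b=1$ into Lemma \ref{lem2}. So the conjecture is equivalent to the single assertion that $\delta(m+n)=\delta(m)+\delta(n)$ for all $m,n\in\mathcal{T}_{12}$, assuming only (i), (ii), (a), (b).

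The new ingredient I would introduce is a ``doubling'' lemma. Fix $a\in\mathcal{T}_{11}$ and $m,n\in\mathcal{T}_{12}$ and expand $\delta\big((a+m)(a+n)+(a+n)(a+m)\big)=\delta(2a^{2}+am+an)$ in two ways: directly, via the hypothesis on $\delta$ together with Lemma \ref{lem1}(i) (to split $\delta(a+m)$, $\delta(a+n)$) and additivity of $\tau$; and via Lemma \ref{lem1}(i) applied to the decomposition $2a^{2}\in\mathcal{T}_{11}$, $am+an\in\mathcal{T}_{12}$, with $\delta(2a^{2})$ read off from the hypothesis. Comparing, all terms cancel except $\delta(m)n+m\tau(n)+\delta(n)m+n\tau(m)=\delta(mn+nm)=\delta(0)=0$ since $\mathcal{T}_{12}\mathcal{T}_{12}=0$, which yields $\delta(am+an)=\delta(am)+\delta(an)$. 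The mirror computation with $b\in\mathcal{T}_{22}$ (using Lemma \ref{lem1}(ii)) gives $\delta(mb+nb)=\delta(mb)+\delta(nb)$. In particular $\delta$ is additive on $t_{1}\mathcal{T}_{12}$ for each fixed $t_{1}\in\mathcal{T}_{11}$ and on $\mathcal{T}_{12}t_{2}$ for each fixed $t_{2}\in\mathcal{T}_{22}$, with no use of (iii).

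Given this, set $d:=\delta(m+n)-\delta(m)-\delta(n)$. For $t_{2}\in\mathcal{T}_{22}$ one has $(m+n)t_{2}+t_{2}(m+n)=mt_{2}+nt_{2}$; expanding $\delta$ of this once by the hypothesis on $\delta$ ($\tau$ additive) and once by the doubling lemma followed by the hypothesis applied to $\delta(mt_{2})$ and $\delta(nt_{2})$, everything cancels except $dt_{2}=0$. Since $dt_{2}=d_{12}t_{2}+d_{22}t_{2}$ lies in two different components, $d_{12}t_{2}=0=d_{22}t_{2}$ for all $t_{2}$, so $d_{12}=0$ by faithfulness of $M$ as a right $B$-module and $d_{22}=0$ by (ii). The symmetric argument with $t_{1}\in\mathcal{T}_{11}$, where $(m+n)t_{1}+t_{1}(m+n)=t_{1}m+t_{1}n$, gives $dt_{1}=0$; as $d=d_{11}$ by the previous step, $d_{11}t_{1}=0$ for all $t_{1}$ forces $d_{11}=0$ by (i). Hence $d=0$, which is Lemma \ref{lem3} without (iii); the remainder of the proof of Theorem \ref{der1}, including the $2$-torsion-free clause, then applies verbatim.

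The step I expect to be the real obstacle is not the doubling lemma but the fact that it, and Lemma \ref{lem2}, rest on Lemma \ref{lem1}, and that Lemma \ref{lem1}(ii) --- $\delta(b+m)=\delta(b)+\delta(m)$ for $b\in\mathcal{T}_{22}$, $m\in\mathcal{T}_{12}$ --- is genuinely delicate without an identity in $B$. Part (i) survives because probing $a+m$ by $t_{2}\in\mathcal{T}_{22}$ yields $[\delta(a+m)-\delta(a)-\delta(m)]t_{2}=0$, which, being a right multiplication by $\mathcal{T}_{22}$, controls both the $(1,2)$- and the $(2,2)$-component of the discrepancy. For part (ii) the same strategy stalls: the Jordan identity only ever right-multiplies $\delta$-values, probing $b+m$ by $\mathcal{T}_{11}$ or by $\mathcal{T}_{12}$ determines only the $(1,1)$-component, and probing by $\mathcal{T}_{22}$ is circular because $bt_{2}+t_{2}b\neq 0$ reintroduces $\delta$ of the type of sum one is trying to control. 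So the crux is to show that the $(1,2)$- and $(2,2)$-components of $\delta(b+m)-\delta(b)-\delta(m)$ vanish using only (i), (ii), (a), (b); I expect this to need either a finer analysis of where $\delta$ carries $\mathcal{T}_{22}$, a new auxiliary identity, or an extra ``uniformity'' hypothesis on $M$ in the spirit of the relation $2\mathbb{Z}+3\mathbb{Z}=\mathbb{Z}$ that underlies Example \ref{ex1}. Once Lemma \ref{lem1}(ii) is available without (iii), the plan above finishes the conjecture.
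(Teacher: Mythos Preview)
The statement you are attempting is Conjecture~2.8, which the paper explicitly leaves open; there is no proof in the paper to compare against. Your proposal is not a proof either, and in your final paragraph you say as much: you reduce the question to establishing Lemma~\ref{lem1}(ii) without hypothesis~(iii) and then concede that you do not see how to do this.

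Your opening dependency trace is internally inconsistent. You first assert that ``the lone genuine appeal to (iii) is in Lemma~\ref{lem3},'' but your own final paragraph correctly retracts this: the paper's ``similarly'' for Lemma~\ref{lem1}(ii) is already the obstruction. The point you make there is sound. In the identity $\delta(xy+yx)=\delta(x)y+x\tau(y)+\delta(y)x+y\tau(x)$, the value $\delta(b+m)$ appears only as a left factor, so every probe produces a relation of the form $[\delta(b+m)-\delta(b)-\delta(m)]\,t=0$; for $t\in\mathcal{T}_{11}$ or $t\in\mathcal{T}_{12}$ this sees only the $(1,1)$-component, and taking $t\in\mathcal{T}_{22}$ is circular because $(b+m)\circ t_2=(bt_2+t_2b)+mt_2$ is again of the shape $b'+m'$. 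The asymmetry is real: there is no $\mathcal{T}_{21}$ to play the role that $\mathcal{T}_{12}$ plays in part~(i).

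Your doubling lemma is correct as stated. The $a$-version, $\delta(am+an)=\delta(am)+\delta(an)$, needs only Lemma~\ref{lem1}(i) and indeed yields $d_{11}=0$ for $d=\delta(m+n)-\delta(m)-\delta(n)$ via the $t_1$-probe. But the step $d_{12}=d_{22}=0$ in your argument uses the $b$-version $\delta(mb+nb)=\delta(mb)+\delta(nb)$, which rests on Lemma~\ref{lem1}(ii). So the doubling lemma relocates the difficulty rather than removing it; your plan does not circumvent~(iii), it funnels everything back to the same missing step. In short, you and the authors end at the same place: the conjecture stands or falls on controlling the $(1,2)$- and $(2,2)$-components of $\delta(b+m)-\delta(b)-\delta(m)$ without an identity in $B$, and neither the paper nor your proposal supplies that argument.
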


    \section{TWO-SIDED CENTRALIZERS ON TRIANGULAR RINGS}
    \begin{theorem}
    	\label{thm3.1}
    	Let $\mathcal{T}$ be a triangular ring with
    	conditions:\\
    	
    	(i) $Aa=0$ for some $a\in A$ implies $a=0$,
    	
    	(ii) $bB=0$ for some $b\in B$ implies $b=0$,
    	
    	(iii) $aA=0$ for some $a\in A$ implies $a=0$.\\
    	
    	If a map $T:\mathcal{T}  \rightarrow \mathcal{T}$ satisfies
    	\begin{align*}
    		T(ab)=T(a)b=aT(b),
    	\end{align*}
    	
    	for all $a,b\in \mathcal{T}$, then $T$ is additive. Moreover, $T$ is a two-sided centralizer.
    \end{theorem}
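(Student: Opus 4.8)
The plan is to exploit the Peirce decomposition $\mathcal{T}=\mathcal{T}_{11}\oplus\mathcal{T}_{12}\oplus\mathcal{T}_{22}$, extracting information from the two relations $T(ab)=T(a)b$ and $T(ab)=aT(b)$ by placing $a$ and $b$ in the various summands. First note $T(0)=T(0\cdot 0)=T(0)\cdot 0=0$. The initial goal is to show that $T$ preserves the grading. To get $T(\mathcal{T}_{12})\subseteq\mathcal{T}_{12}$ I would use $t_{12}s_{12}=0$, which forces $0=T(t_{12})s_{12}=t_{12}T(s_{12})$; letting $s_{12}$ range over $\mathcal{T}_{12}$, the relation $T(t_{12})s_{12}=0$ annihilates the $\mathcal{T}_{11}$-component of $T(t_{12})$ because $M$ is faithful as a left $A$-module, and $t_{12}T(s_{12})=0$ then annihilates the $\mathcal{T}_{22}$-component via condition (b) of Definition~\ref{def1}. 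A parallel computation starting from $t_{11}s_{22}=0=s_{22}t_{11}$, using conditions (i) and (ii) of the theorem together with condition (b) of Definition~\ref{def1}, yields $T(\mathcal{T}_{11})\subseteq\mathcal{T}_{11}$ and $T(\mathcal{T}_{22})\subseteq\mathcal{T}_{22}$.

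Next I would establish additivity on each summand. Writing $t_A$, $t_B$, $t_M$ for the restrictions of $T$ to $\mathcal{T}_{11}\cong A$, $\mathcal{T}_{22}\cong B$, $\mathcal{T}_{12}\cong M$ (which makes sense by the previous step), the instances $T(a_{11}m_{12})=T(a_{11})m_{12}=a_{11}T(m_{12})$ and $T(m_{12}b_{22})=T(m_{12})b_{22}=m_{12}T(b_{22})$ of the hypothesis translate into the transfer identities $t_A(a)m=a\,t_M(m)$ and $t_M(m)b=m\,t_B(b)$, valid for all $a\in A$, $b\in B$, $m\in M$. Substituting a sum in the first slot of each identity and cancelling --- using faithfulness of $M$ as a left $A$-module, as a right $B$-module, and condition (b) of Definition~\ref{def1} --- shows that $t_A$, $t_B$ and $t_M$ are additive; in particular $T$ is additive on each $\mathcal{T}_{ij}$.

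It then remains to handle cross terms and to assemble a general element. For $a_{11}\in\mathcal{T}_{11}$ and $m_{12}\in\mathcal{T}_{12}$, set $X=T(a_{11}+m_{12})$; right-multiplying $a_{11}+m_{12}$ by elements of $\mathcal{T}_{22}$ and then by elements of $\mathcal{T}_{11}$ and comparing Peirce components --- using grading-preservation, so that $T(m_{12}t_{22})=T(m_{12})t_{22}$ already lies in $\mathcal{T}_{12}$ --- forces $X=T(a_{11})+T(m_{12})$. The companion computation, left-multiplying $m_{12}+b_{22}$ by elements of $\mathcal{T}_{11}$ and of $\mathcal{T}_{12}$, gives $T(m_{12}+b_{22})=T(m_{12})+T(b_{22})$. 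For a general $t=t_{11}+t_{12}+t_{22}$, right-multiplying $t$ by elements of $\mathcal{T}_{22}$ (now using the just-established $\mathcal{T}_{12}+\mathcal{T}_{22}$ additivity) and by elements of $\mathcal{T}_{11}$ yields $T(t)=T(t_{11})+T(t_{12})+T(t_{22})$. Applying this decomposition formula to $t$, $u$ and $t+u$, and combining with the summand-wise additivity of the previous step, gives $T(t+u)=T(t)+T(u)$. Since $T$ is then additive and satisfies $T(ab)=T(a)b=aT(b)$, it is at once a left and a right centralizer, hence a two-sided centralizer.

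The hard part will be coping with the asymmetry of the hypotheses: we are given $bB=0\Rightarrow b=0$ but not the left-handed version $Bb=0\Rightarrow b=0$. Consequently, any step that would naturally annihilate an element of $\mathcal{T}_{22}$ from the left must be rerouted through multiplications by $\mathcal{T}_{12}$ (exploiting that $M$ is faithful as a right $B$-module) rather than by $\mathcal{T}_{22}$ --- this is exactly the delicate point in the $T(m_{12}+b_{22})$ argument above. A second, more structural difficulty is that $A$ and $B$ need not carry a non-trivial idempotent, so additivity of $t_A$ and $t_B$ cannot be produced internally; it is the bimodule $M$, through the transfer identities $t_A(a)m=a\,t_M(m)$, $t_M(m)b=m\,t_B(b)$ and the faithfulness axioms of Definition~\ref{def1}, that carries additivity across all three summands.
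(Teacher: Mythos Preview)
Your argument is correct, but it follows a genuinely different route from the paper's. The paper never establishes grading preservation; instead it works throughout with Peirce components of the differences $T(x+y)-T(x)-T(y)$ and builds a chain of lemmas: first $T(a+m)=T(a)+T(m)$ and $T(b+m)=T(b)+T(m)$; then, via the factorisation $am+nb=(a+n)(b+m)$, the identity $T(am+nb)=T(am)+T(nb)$; then $T(mb+nb)=T(mb)+T(nb)$; from that, additivity on $\mathcal{T}_{12}$; only then additivity on $\mathcal{T}_{11}$ and $\mathcal{T}_{22}$; and finally the splitting $T(a+m+b)=T(a)+T(m)+T(b)$. By contrast, you first pin down $T(\mathcal{T}_{ij})\subseteq\mathcal{T}_{ij}$ and then read off additivity on each summand directly from the transfer identities $t_A(a)m=a\,t_M(m)$ and $t_M(m)b=m\,t_B(b)$, bypassing the factorisation trick and the intermediate Lemmas~3.3--3.4 entirely. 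Your approach is structurally cleaner and decouples additivity on the three summands from one another, at the cost of the extra grading-preservation step; the paper's approach avoids that step but pays for it with a longer dependency chain among the lemmas. Two minor expository points: in your $\mathcal{T}_{12}$ grading argument, the relation that kills the $\mathcal{T}_{22}$-component of $T(t_{12})$ is $s_{12}T(t_{12})=0$ (not $t_{12}T(s_{12})=0$), and the relevant hypothesis is faithfulness of $M$ as a right $B$-module (part~(a), not~(b), of Definition~\ref{def1}); neither affects the validity of the argument.
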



    \begin{lemma}
    	\label{lem3.2}
    	Let $a\in \mathcal{T}_{11}$, $b\in \mathcal{T}_{22}$ and $m\in \mathcal{T}_{12}$. Then
    	\begin{equation}
    		\label{eq:3.1}
    		\begin{aligned}
    			&(i)~T(a+m)=T(a)+T(m),\\
    			&(ii)~T(b+m)=T(b)+T(m).
    		\end{aligned}
    	\end{equation}
    \end{lemma}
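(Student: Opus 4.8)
The plan is the standard block-decomposition argument for centralizer-type functional equations, carried out one Peirce component at a time. For part (i), set $W := T(a+m) - T(a) - T(m)$ and write $W = W_{11} + W_{12} + W_{22}$ with $W_{ij}\in\mathcal{T}_{ij}$; the goal is to show each $W_{ij}=0$. The only inputs needed are the hypothesis $T(xy)=T(x)y=xT(y)$, the orthogonality relations $t_{ij}t_{kl}=0$ for $j\neq k$, and $T(0)=0$ (immediate from $T(0)=T(0\cdot 0)=T(0)\cdot 0$). The recurring device is to multiply $a+m$ by a corner element $t_{ij}$ chosen so that one of $a$, $m$ is annihilated; this is forced on us because $T$ is not yet known to be additive, so $T$ of a product can only be rewritten when the product collapses to a single summand.

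First I would pin down $W_{11}$ by right multiplication by $t_{11}\in\mathcal{T}_{11}$: since $mt_{11}=0$ we get $T(a+m)t_{11}=T(at_{11})=T(a)t_{11}$, while $T(m)t_{11}=T(0)=0$, so $Wt_{11}=0$. As only $W_{11}$ survives right multiplication by $\mathcal{T}_{11}$, this gives $W_{11}A=0$, whence $W_{11}=0$ by condition (iii). Next, right multiplication by $t_{22}\in\mathcal{T}_{22}$ kills $a$ (as $at_{22}=0$), giving $T(a+m)t_{22}=T(m)t_{22}$ and $T(a)t_{22}=0$, hence $Wt_{22}=0$; this yields $W_{22}B=0$ and $W_{12}B=0$. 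Condition (ii) gives $W_{22}=0$, and from $W_{12}B=0$ one gets $AW_{12}B=0$, so Definition \ref{def1}(b) gives $W_{12}=0$. Hence $W=0$, proving (i).

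For part (ii) I would run the mirror argument using left multiplications. Put $V := T(b+m)-T(b)-T(m) = V_{11}+V_{12}+V_{22}$. Left multiplication by $t_{11}\in\mathcal{T}_{11}$ annihilates $b$ (since $t_{11}b=0$), giving $t_{11}T(b+m)=t_{11}T(m)$ and $t_{11}T(b)=0$, hence $t_{11}V=0$; this forces $AV_{11}=0$ and $AV_{12}=0$, so $V_{11}=0$ by condition (i) and $V_{12}=0$ by Definition \ref{def1}(b). For the remaining corner, left multiplication by $t_{12}\in\mathcal{T}_{12}$ annihilates $m$ (as $t_{12}m=0$), so $t_{12}T(b+m)=t_{12}T(b)$ and $t_{12}T(m)=0$, hence $t_{12}V=0$; the only surviving term is $t_{12}V_{22}$, giving $MV_{22}=0$, and faithfulness of $M$ as a right $B$-module (Definition \ref{def1}(a)) yields $V_{22}=0$. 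Hence $V=0$, proving (ii).

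The step I expect to be the genuine obstacle is the off-diagonal block $W_{12}$ (and, symmetrically, the corner $V_{22}$): conditions (i)--(iii) of Theorem \ref{thm3.1} only see the diagonal corners of $\mathcal{T}$, and there is no hypothesis of the form ``$mB=0\Rightarrow m=0$'' for $m\in M$, so one must first extract a one-sided annihilation and then either upgrade it to $AW_{12}B=0$ to invoke Definition \ref{def1}(b), or convert it to $MV_{22}=0$ to invoke the faithfulness half of Definition \ref{def1}(a). A secondary point requiring care is the choice of which side to multiply on at each step: it must be the side on which the ``wrong'' summand of $a+m$ or $b+m$ dies, since otherwise $T$ of the product cannot be split without already knowing additivity.
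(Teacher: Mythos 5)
Your proof is correct and follows essentially the same Peirce-component annihilation argument as the paper; the only variations are that you kill $W_{11}$ by right-multiplying by $\mathcal{T}_{11}$ and invoking condition (iii) of Theorem \ref{thm3.1}, where the paper right-multiplies by $n\in\mathcal{T}_{12}$ and uses faithfulness of $M$ as a left $A$-module, and that you spell out part (ii) via left multiplications, which the paper dismisses with ``similarly.'' Both routes are valid under the stated hypotheses.
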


    \begin{proof}
    	Let $t_2 \in \mathcal{T}_{22}$. Now, we have
    	\begin{equation}
    		\label{eq:3.2}
    		\begin{aligned}
    			&T[(a+m)t_2]=T(a+m)t_2.
    		\end{aligned}
    	\end{equation}
    	
    	Also,
    	\begin{equation}
    		\label{eq:3.3}
    		\begin{aligned}
    			T[(a+m)t_2]=T(mt_2)&=0+T(mt_2)\\
    			&=T(at_2)+T(mt_2)\\
    			&=T(a)t_2+T(m)t_2.
    		\end{aligned}
    	\end{equation}
    	
    	Comparing identities \eqref{eq:3.2} and \eqref{eq:3.3},
    	\begin{equation}
    		\label{eq:3.4}
    		\begin{aligned}
    			&[T(a+m)-T(a)-T(m)]t_2=0\\
    			&\implies [T(a+m)-T(a)-T(m)]_{12}t_2=0\\
    			&\text{and}~[T(a+m)-T(a)-T(m)]_{22}t_2=0\\
    			&\implies [T(a+m)-T(a)-T(m)]_{12}=0 ~(\text{By condition (b) of Definition \ref{def1}})\\
    			&\text{and}~[T(a+m)-T(a)-T(m)]_{22}=0~(\text{By condition (ii) of Theorem \ref{thm3.1}}).
    		\end{aligned}
    	\end{equation}
    	
    	Let $n\in \mathcal{T}_{12}$. We have
    	\begin{equation}
    		\label{eq:3.5}
    		\begin{aligned}
    			&T[(a+m)n]=T(a+m)n.
    		\end{aligned}
    	\end{equation}
    	
    	Also,
    	\begin{equation}
    		\label{eq:3.6}
    		\begin{aligned}
    			&T[(a+m)n] =T(an)+0=T(an)+T(mn)=T(a)n+T(m)n.
    		\end{aligned}
    	\end{equation}
    	
    	Comparing \eqref{eq:3.5} and \eqref{eq:3.6}, we have
    	\begin{equation}
    		\label{eq:3.7}
    		\begin{aligned}
    			&[T(a+m)-T(a)-T(m)]n=0\\
    			&\implies [T(a+m)-T(a)-T(m)]_{11}n=0\\
    			&\implies [T(a+m)-T(a)-T(m)]_{11}=0~(\text{By Condition (a) of Definition \ref{def1}}).
    		\end{aligned}
    	\end{equation}
    	Hence, by using \eqref{eq:3.4} and \eqref{eq:3.7}, we get $(i)$ of Lemma \ref{lem3.2}. Similarly, we have $(ii)$ of Lemma \ref{lem3.2}.
    \end{proof}

    \begin{lemma}
    	\label{lem3.3}
    	Let $a \in \mathcal{T}_{11}, b \in \mathcal{T}_{22}$ and $m, n \in \mathcal{T}_{12}$. Then
    	\begin{equation}
    		T(a m+n b)=T(a m)+T(n b).
    	\end{equation}
    \end{lemma}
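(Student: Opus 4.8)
The plan is to re-use the factorization trick that appeared in Lemma~\ref{lem2} of the previous section, but now driven by the multiplicative identities $T(xy)=T(x)y=xT(y)$ rather than by a Jordan identity. First I would note that, since $a\in\mathcal{T}_{11}$, $b\in\mathcal{T}_{22}$ and $m,n\in\mathcal{T}_{12}$, the products $ab$ and $nm$ both vanish (because $\mathcal{T}_{11}\mathcal{T}_{22}=0$ and $\mathcal{T}_{12}\mathcal{T}_{12}=0$), so that
\[
(a+n)(m+b)=am+ab+nm+nb=am+nb .
\]
Applying the hypothesis with $x=a+n$ and $y=m+b$ then gives $T(am+nb)=T\bigl((a+n)(m+b)\bigr)=T(a+n)(m+b)$.

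Next I would invoke Lemma~\ref{lem3.2}(i) to split $T(a+n)=T(a)+T(n)$, which applies since $a\in\mathcal{T}_{11}$ and $n\in\mathcal{T}_{12}$. Expanding the product,
\[
T(am+nb)=\bigl(T(a)+T(n)\bigr)(m+b)=T(a)m+T(a)b+T(n)m+T(n)b .
\]
From the hypothesis, $T(a)m=T(am)$ and $T(n)b=T(nb)$ directly, while $T(a)b=T(ab)=T(0)=0$ and $T(n)m=T(nm)=T(0)=0$, using $ab=nm=0$ together with $T(0)=T(0\cdot 0)=T(0)0=0$. Collecting the four terms yields $T(am+nb)=T(am)+T(nb)$, which is the claim.

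There is no real obstacle here: the whole argument is a short computation resting on the observation that, in a triangular ring, a sum of two elements of $\mathcal{T}_{12}$ of the special shapes $am$ and $nb$ can be written as the single product $(a+n)(m+b)$ with no interfering cross terms, together with the additivity already established in Lemma~\ref{lem3.2}. The only point that needs a little care is the Peirce-grade bookkeeping, so that each of the four terms $T(a)m$, $T(a)b$, $T(n)m$, $T(n)b$ is identified correctly; in particular one need not use the alternative representation $T(am+nb)=(a+n)T(m+b)$, though it would work equally well via Lemma~\ref{lem3.2}(ii).
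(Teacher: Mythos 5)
Your proof is correct and follows essentially the same route as the paper: both rest on the factorization $am+nb=(a+n)(m+b)$ together with Lemma~\ref{lem3.2}. The only (immaterial) difference is that you peel off the left factor via $T(xy)=T(x)y$ and Lemma~\ref{lem3.2}(i), whereas the paper uses $T(xy)=xT(y)$ and Lemma~\ref{lem3.2}(ii), as you yourself note.
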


    \begin{proof}
    	Since $a m + nb= (a + n)(b + m)$, we have
    	\begin{equation}
    		\label{eq:3.9}
    		\begin{aligned}
    			T(a m + nb)&= T((a+ n)(b + m))\\
    			&=(a + n)T(b + m)\\
    			&=(a + n)(T(b) + T(m))~(\text{By (ii) of Lemma \ref{lem3.2}}).
    		\end{aligned}
    	\end{equation}
    	
    	Also,
    	\begin{equation}
    		\label{eq:3.10}
    		\begin{aligned}
    			T(a m)+T(n b)&=T(a m)+T(n m)+T(n b)+T(a b)\\
    			&=a T(m)+n T(m)+n T(b)+a T(b)\\
    			&=(a+n) T(m)+(a+n) T(b)\\
    			&=(a+n)(T(m)+T(b)).
    		\end{aligned}
    	\end{equation}
    	
    	Comparing \eqref{eq:3.9} and \eqref{eq:3.10}, we have the desired result.
    \end{proof}

    \begin{lemma}
    	\label{lem3.4}
    	Let $b \in \mathcal{T}_{22}$ and $m,n \in \mathcal{T}_{12}$. Then
    	\begin{equation}
    		T(m b+n b)=T(m b) + T(n b).
    	\end{equation}
    \end{lemma}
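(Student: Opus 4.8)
The plan is to obtain this lemma directly from the defining identity, bypassing the Peirce-decomposition bookkeeping used in Lemmas~\ref{lem3.2}--\ref{lem3.3}. The key observation is that $mb$ and $nb$ share the common right factor $b$, so that $mb+nb=(m+n)b$, and $m+n$ still lies in $\mathcal{T}_{12}\subseteq\mathcal{T}$; hence the half $T(xy)=xT(y)$ of the hypothesis may be applied to the product $(m+n)b$ without ever asking whether $T$ is additive on $\mathcal{T}_{12}$.

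Concretely, I would argue as follows. Applying $T(xy)=xT(y)$ with $x=m+n$ and $y=b$, and then using left-distributivity in $\mathcal{T}$,
\[
T(mb+nb)=T\big((m+n)b\big)=(m+n)T(b)=mT(b)+nT(b).
\]
Applying the same half of the hypothesis once more, now with $x=m,\ y=b$ and with $x=n,\ y=b$, gives $mT(b)=T(mb)$ and $nT(b)=T(nb)$; substituting these back yields $T(mb+nb)=T(mb)+T(nb)$, as required.

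There is essentially no obstacle here: the entire content is that the problematic sum $m+n$ is kept \emph{outside} of $T$, on the left of $T(b)$, so the argument uses only the hypothesis and the ring axioms, and none of the torsion or faithfulness conditions enter. (The analogous statement $T(am+an)=T(am)+T(an)$ for $a\in\mathcal{T}_{11}$, $m,n\in\mathcal{T}_{12}$ with a common \emph{left} factor would be proved in the same way, using the other half $T(xy)=T(x)y$ of the hypothesis.) One need only check that each application of the identity is to a genuine product of elements of $\mathcal{T}$, which it is, since $m+n,\ m,\ n,\ b\in\mathcal{T}$ and $\mathcal{T}_{12}\mathcal{T}_{22}\subseteq\mathcal{T}_{12}$.
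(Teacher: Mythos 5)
Your proof is correct, and it is genuinely simpler than the one in the paper. You exploit the common right factor to write $mb+nb=(m+n)b$, keep the sum $m+n$ outside of $T$ via the half $T(xy)=xT(y)$ of the hypothesis, and finish with ring distributivity and two more applications of the same identity; every step is a legitimate application of the hypothesis to genuine products in $\mathcal{T}$. The paper instead follows the section's general template: it left-multiplies $(m+n)b$ by arbitrary $a\in\mathcal{T}_{11}$ and $p\in\mathcal{T}_{12}$, expands using Lemma~\ref{lem3.3} and $T(0)=0$, and then cancels the left factors componentwise in the Peirce decomposition using condition (i) of Theorem~\ref{thm3.1} and conditions (a), (b) of Definition~\ref{def1}. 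Your route buys quite a lot: it needs none of the faithfulness or primeness-type hypotheses, does not invoke Lemma~\ref{lem3.3} (or any earlier lemma), and in fact proves the stronger statement that $T(xb+yb)=T(xb)+T(yb)$ for \emph{all} $x,y,b\in\mathcal{T}$, not just for the particular Peirce components in the lemma. The only thing the paper's heavier machinery buys is uniformity with the neighbouring lemmas (such as Lemma~\ref{lem3.5} and Lemma~\ref{lem3.6}), where no common factor is available and the componentwise cancellation argument is genuinely needed.
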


    \begin{proof}
    	Let  $a \in \mathcal{T}_{11}$. Then
    	
    	\begin{equation}
    		\label{eq:3.12}
    		\begin{aligned}
    			T[a((m+n) b) ] =a T((m+n) b ).
    		\end{aligned}
    	\end{equation}
    	
    	Also,
    	\begin{equation}
    		\label{eq:3.13}
    		\begin{aligned}
    			T[a((m+n) b) ]&=T (a (m b )+ (a n) b )\\
    			& =T(a (m b) ) + T ((a n) b )~(\text{By Lemma \ref{lem3.3}})\\
    			&=a T(m b )+a T (n b ).\\
    		\end{aligned}
    	\end{equation}
    	
    	Comparing \eqref{eq:3.12} and \eqref{eq:3.13}, we have
    	\begin{equation}
    		\label{eq:3.14}
    		\begin{aligned}
    			& a [T(mb+nb)-T (m b )-T (m b ) ]=0\\
    			& \implies a[T (mb+nb )-T (m b )-T(n b )]_{11}=0 \\
    			& \&~a[T (mb+nb )-T (m b )-T (n b ) ]_{12}=0\\
    			& \implies [T (mb+nb )-T (m b )-T(n b )]_{11}=0~(\text{By condition (i) of Theorem \ref{thm3.1}}) \\
    			& \&~[T (mb+nb)-T (m b )-T (n b ) ]_{12}=0~(\text{By condition (b) of Definition \ref{def1}}).
    		\end{aligned}
    	\end{equation}
    	
    	Let $p \in \mathcal{T}_{12}$. Then
    	\begin{equation}
    		\label{eq:3.15}
    		\begin{aligned}
    			T [p ((m+n) b ) ]=p  T((m+n) b ).
    		\end{aligned}
    	\end{equation}
    	
    	Also,
    	\begin{equation}
    		\label{eq:3.16}
    		\begin{aligned}
    			T [p ((m+n) b ) ]&=T [p (m b+n b ) ]\\
    			&=T(0)=0\\
    			&=T (p (m b ) )+T (p (n b ) )\\
    			&=pT (m b )+pT(n b ).
    		\end{aligned}
    	\end{equation}
    	
    	Comparing \eqref{eq:3.15} and \eqref{eq:3.16},
    	\begin{equation}
    		\label{eq:3.17}
    		\begin{aligned}
    			& p[T (mb+nb )-T(m b )-T (n b )]=0\\
    			& \implies p[T (mb+nb )-T(m b )-T (n b )]_{22}=0\\
    			& \implies [T (mb+nb )-T (m b )-T (n b )]_{22}=0~(\text{By condition (a) of Definition \ref{def1}}).
    		\end{aligned}
    	\end{equation}
    	
    Thus the result follows from \eqref{eq:3.14} and \eqref{eq:3.17}.
    \end{proof}

    \begin{lemma}
    	\label{lem3.5}
    	Let $m, n \in \mathcal{T}_{12}$. Then
    	\begin{equation}
    		T(m+n)=T(m)+T(n).
    	\end{equation}
    \end{lemma}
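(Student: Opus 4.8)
The plan is to follow the block-decomposition strategy already used for Lemmas~\ref{lem3.2}--\ref{lem3.4}. Put $\Delta := T(m+n)-T(m)-T(n)$ and decompose $\Delta=\Delta_{11}+\Delta_{12}+\Delta_{22}$ with $\Delta_{ij}\in\mathcal{T}_{ij}$; I will show each $\Delta_{ij}=0$ by multiplying $\Delta$ on a suitable side by a generic element of the appropriate block and then invoking the relevant faithfulness or torsion hypothesis. Note that, unlike in Section~2, this section does not assume $A$ or $B$ has an identity, so the shortcut of Lemma~\ref{lem3} (substituting $1$ into the previous additivity lemma) is unavailable; testing $\Delta$ directly against $\mathcal{T}_{11}$ and $\mathcal{T}_{22}$ is what replaces it.

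First I would dispose of $\Delta_{12}$ and $\Delta_{22}$. Fix $t_2\in\mathcal{T}_{22}$. By the defining relation $T\big((m+n)t_2\big)=T(m+n)t_2$, while on the other hand $(m+n)t_2=mt_2+nt_2$ with $mt_2,nt_2\in\mathcal{T}_{12}$, so Lemma~\ref{lem3.4} gives $T\big((m+n)t_2\big)=T(mt_2)+T(nt_2)=T(m)t_2+T(n)t_2$. Comparing, $\Delta t_2=0$ for every $t_2\in\mathcal{T}_{22}$; since $\Delta_{11}t_2=0$ automatically, this says $\Delta_{12}t_2=0$ and $\Delta_{22}t_2=0$ for all $t_2$. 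Then condition~(b) of Definition~\ref{def1} forces $\Delta_{12}=0$ and condition~(ii) of Theorem~\ref{thm3.1} forces $\Delta_{22}=0$, exactly as in the proof of Lemma~\ref{lem3.2}.

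Finally I would kill $\Delta_{11}$. The key observation is that $m+n$, $m$ and $n$ all lie in $\mathcal{T}_{12}$ and $\mathcal{T}_{12}\mathcal{T}_{11}=0$, so for any $t_1\in\mathcal{T}_{11}$ we have $T(m+n)t_1=T\big((m+n)t_1\big)=T(0)=0$ and likewise $T(m)t_1=T(n)t_1=0$, whence $\Delta t_1=0$. Since $\Delta_{12}t_1=\Delta_{22}t_1=0$ automatically, this gives $\Delta_{11}t_1=0$ for all $t_1\in\mathcal{T}_{11}$, so condition~(iii) of Theorem~\ref{thm3.1} ($aA=0\Rightarrow a=0$) yields $\Delta_{11}=0$, and hence $T(m+n)=T(m)+T(n)$. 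I do not expect a genuine obstacle here; the only thing requiring care is the bookkeeping of which hypothesis matches which block — in particular that $\Delta_{11}$ is being annihilated on the \emph{right} (by $\mathcal{T}_{11}$), which is why condition~(iii) of Theorem~\ref{thm3.1}, not condition~(i), is the one that applies. (Multiplying $\Delta_{11}$ on the right by $\mathcal{T}_{12}$ and using the left $A$-faithfulness contained in condition~(a) of Definition~\ref{def1} would be an equally good alternative.)
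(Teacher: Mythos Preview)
Your proposal is correct and follows the same block-decomposition strategy as the paper. The only difference is in killing $\Delta_{11}$: the paper multiplies on the right by an arbitrary $p\in\mathcal{T}_{12}$ (so that $(m+n)p=mp=np=0$) and then invokes condition~(a) of Definition~\ref{def1}, which is precisely the alternative you mention parenthetically at the end; your primary route via $t_1\in\mathcal{T}_{11}$ and condition~(iii) of Theorem~\ref{thm3.1} is an equally valid minor variant.
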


    \begin{proof}
    	Let $b\in \mathcal{T}_{22}$. Then
    	\begin{equation}
    		\label{eq:3.19}
    		\begin{aligned}
    			T((m+n) b )=T(m+n) b.
    		\end{aligned}
    	\end{equation}
    	
    	Now,
    	\begin{equation}
    		\label{eq:3.20}
    		\begin{aligned}
    			T((m+n) b )&= T(m b+n b )\\
    			&= T(m b )+T (n b )~(\text{By Lemma \ref{lem3.4}})\\
    			&=T(m) b+T(n) b.
    		\end{aligned}
    	\end{equation}
    	
    	By \eqref{eq:3.19} and \eqref{eq:3.20},
    	\begin{equation}
    		\label{eq:3.21}
    		\begin{aligned}
    			&[T(m+n)-T(m)-T(n)]b=0\\
    			& \implies [T(m+n)-T(m)-T(n)]_{12} b=0\\
    			& \&~ [T(m+n)-T(m)-T(n)]_{22} b=0\\
    			& \implies [T(m+n)-T(m)-T(n)]_{12}=0~(\text{By condition (b) of Definition \ref{def1}})\\
    			& \&~ [T(m+n)-T(m)-T(n)]_{22}=0~(\text{By condition (ii) of Theorem \ref{thm3.1}}).
    		\end{aligned}
    	\end{equation}
    	
    	Let $p \in \mathcal{T}_{12}$. Then
    	\begin{equation}
    		\label{eq:3.22}
    		\begin{aligned}
    			T ((m+n) p)=T(m+n) p.
    		\end{aligned}
    	\end{equation}	
    	
    	Also,
    	\begin{equation}
    		\label{eq:3.23}
    		\begin{aligned}
    			T ((m+n) p)=T(0)=0=T(mp) + T(np)=T(m)p + T(n)p.
    		\end{aligned}
    	\end{equation}
    	
    	Comparing \eqref{eq:3.22} and \eqref{eq:3.23},
    	\begin{equation}
    		\label{eq:3.24}
    		\begin{aligned}
    			&[T(m+n)-T(m)-T(n)]p=0\\
    			& \implies [T(m+n)-T(m)-T(n)]_{11}p=0\\
    			& \implies [T(m+n)-T(m)-T(n)]_{11}=0~(\text{By condition (a) of Definition \ref{def1}}).
    		\end{aligned}
    	\end{equation}
    	
    	By \eqref{eq:3.21} and \eqref{eq:3.24}, we get the result.
    \end{proof}

    \begin{lemma}
    	\label{lem3.6}
    	Let $a_{1}, a_2 \in \mathcal{T}_{11}$ and $b_{1}, b_{2} \in \mathcal{T}_{22}$. Then
    	\begin{equation}
    		\begin{aligned}
    			& (i)~ T(a_{1} + a_2 )=T (a_{1} )+T (a_2 )\\
    			& (ii)~ T (b_{1}+b_{2} )=T (b_{1} )+T (b_{2} ).
    		\end{aligned}
    	\end{equation}
    \end{lemma}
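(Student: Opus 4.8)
The plan is to establish (i) and (ii) by the Peirce-component-annihilation technique already used in Lemmas \ref{lem3.2}--\ref{lem3.5}, exploiting both halves of the identity $T(xy)=T(x)y=xT(y)$ together with the orthogonality relations $\mathcal{T}_{ij}\mathcal{T}_{kl}=0$ for $j\neq k$. For (i) I would set $X=T(a_1+a_2)-T(a_1)-T(a_2)$ and show that each Peirce component of $X$ vanishes by multiplying $X$ on the right, first by arbitrary elements of $\mathcal{T}_{12}$ and then by arbitrary elements of $\mathcal{T}_{22}$. For (ii) I would set $Y=T(b_1+b_2)-T(b_1)-T(b_2)$ and run the left-handed mirror image, multiplying on the left by elements of $\mathcal{T}_{12}$ and then of $\mathcal{T}_{11}$.

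For (i): given $m\in\mathcal{T}_{12}$, the product $(a_1+a_2)m$ lies in $\mathcal{T}_{12}$, and combining $T((a_1+a_2)m)=T(a_1+a_2)m$ with $T((a_1+a_2)m)=(a_1+a_2)T(m)=a_1T(m)+a_2T(m)=T(a_1)m+T(a_2)m$ yields $Xm=0$; since $X_{12}m=X_{22}m=0$ automatically, this gives $X_{11}m=0$ for every $m\in\mathcal{T}_{12}$, hence $X_{11}=0$ by condition (a) of Definition \ref{def1}. Next, given $t_2\in\mathcal{T}_{22}$, the products $(a_1+a_2)t_2$, $a_1t_2$, $a_2t_2$ are all zero, so $T(a_1+a_2)t_2=T(a_1)t_2=T(a_2)t_2=T(0)=0$ and therefore $Xt_2=0$; since $X_{11}t_2=0$ automatically, we get $X_{12}t_2=0$ and $X_{22}t_2=0$ for all $t_2\in\mathcal{T}_{22}$, whence $X_{12}=0$ by condition (b) of Definition \ref{def1} and $X_{22}=0$ by condition (ii) of Theorem \ref{thm3.1}. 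Thus $X=0$.

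For (ii): given $m\in\mathcal{T}_{12}$, the relation $mT(b_1+b_2)=T(m(b_1+b_2))=T(m)(b_1+b_2)=mT(b_1)+mT(b_2)$ shows $mY=0$, and reading off the single surviving Peirce component gives $Y_{22}=0$ by condition (a) of Definition \ref{def1}. Given $t_1\in\mathcal{T}_{11}$, the products $t_1(b_1+b_2)$, $t_1b_1$, $t_1b_2$ all vanish, so $t_1Y=0$; separating the two surviving blocks then forces $Y_{11}=0$ by condition (i) of Theorem \ref{thm3.1} and $Y_{12}=0$ by condition (b) of Definition \ref{def1}. Hence $Y=0$.

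I do not anticipate a genuine obstacle. The two-sidedness of the hypothesis is strong enough that Lemma \ref{lem3.5} is not even required — the single identity $T(xy)=T(x)y=xT(y)$ suffices — so the argument is essentially bookkeeping: one must correctly track which Peirce component of $X$ (respectively $Y$) is detected by right- (respectively left-) multiplication by which of $\mathcal{T}_{11}$, $\mathcal{T}_{12}$, $\mathcal{T}_{22}$, and then invoke the matching faithfulness hypothesis — from Definition \ref{def1} or from Theorem \ref{thm3.1} — for each of the three vanishing conclusions.
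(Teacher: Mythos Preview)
Your proof is correct and follows the same Peirce-component annihilation scheme as the paper: multiply the defect $X$ (respectively $Y$) on the right (respectively left) by test elements of $\mathcal{T}_{12}$ and $\mathcal{T}_{22}$ (respectively $\mathcal{T}_{12}$ and $\mathcal{T}_{11}$), then invoke the appropriate faithfulness hypothesis to kill each block. The one substantive difference is that the paper, having only used the left-centralizer half $T(xy)=T(x)y$ in the $m$-step, must appeal to Lemma~\ref{lem3.5} to split $T(a_1m+a_2m)$; you instead pass through the right-centralizer half, writing $T((a_1+a_2)m)=(a_1+a_2)T(m)=a_1T(m)+a_2T(m)=T(a_1)m+T(a_2)m$, which is a valid shortcut that renders Lemma~\ref{lem3.5} unnecessary for this lemma, exactly as you observe.
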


    \begin{proof}
    	Let $t_{2} \in \mathcal{T}_{22}$. Then
    	\begin{equation}
    		\label{eq:3.26}
    		\begin{aligned}
    			T ( (a_{1}+a_2 ) t_{2} ) = T (a_{1}+a_2 ) t_{2}.
    		\end{aligned}
    	\end{equation}
    	
    	Now,
    	\begin{equation}
    		\label{eq:3.27}
    		\begin{aligned}
    			T ( (a_{1}+a_2 ) t_{2} ) = T (0)=T (a_{1} t_{2} )+T (a_2 t_{2} )=T (a_{1} ) t_{2}+T (a_2 ) t_{2}.
    		\end{aligned}
    	\end{equation}
    	
    	Comparing \eqref{eq:3.26} and \eqref{eq:3.27},
    	\begin{equation}
    		\label{eq:3.28}
    		\begin{aligned}
    			& [T(a_1+a_2)-T(a_1)-T(a_2)]t_2=0\\
    			& \implies [T(a_1+a_2)-T(a_1)-T(a_2)]_{12} t_2=0\\
    			& \&~[T(a_1+a_2)-T(a_1)-T(a_2)]_{22} t_2=0\\
    			& \implies  [T(a_1+a_2)-T(a_1)-T(a_2)]_{12}=0~(\text{By condition (b) of Definition \ref{def1}})\\
    			& \&~[T(a_1+a_2)-T(a_1)-T(a_2)]_{22} =0~(\text{By condition (ii) of Theorem \ref{thm3.1}}).
    		\end{aligned}
    	\end{equation}
    	
    	Let $m\in \mathcal{T}_{12}$. Then
    	\begin{equation}
    		\label{eq:3.29}
    		T ( (a_{1}+a_2 ) m )=T (a_{1}+a_2 ) m.
    	\end{equation}
    	
    	Also,
    	\begin{equation}
    		\label{eq:3.30}
    		\begin{aligned}
    			T ( (a_{1}+a_2 ) m )&=T(a_{1} m+a_2 m )\\
    			&=T (a_{1} m )+T(a_2 m )~(\text{By Lemma \ref{lem3.5}})\\
    			&=T (a_{1} ) m+T (a_2 ) m.
    		\end{aligned}
    	\end{equation}
    	
    	Comparing \eqref{eq:3.29} and \eqref{eq:3.30},
    	\begin{equation}
    		\label{eq:3.31}
    		\begin{aligned}
    			&T (a_{1}+a_2 )-T (a_{1} )-T(a_2 ) ] m=0\\
    			& \implies [T (a_{1}+a_2 )-T (a_{1} )-T(a_2 ) ]_{11} m=0 \\
    			& \implies [T (a_{1}+a_2 )-T (a_{1} )-T(a_2 ) ]_{11}=0~(\text{By condition (a) of Definition \ref{def1}}).
    		\end{aligned}
    	\end{equation}
    	
    	By \eqref{eq:3.28} and \eqref{eq:3.31}, we have $(i)$ of Lemma \ref{lem3.6}. Similarly, we can prove $(ii)$ of Lemma \ref{lem3.6}.
    \end{proof}

    \begin{lemma}
    	\label{lem3.7}
    	Let $a \in \mathcal{T}_{11}, b \in \mathcal{T}_{22}$ and $m \in \mathcal{T}_{12}$. Then
    	\begin{equation}
    		T (a+m+b )=T (a )+T(m)+T (b ).
    	\end{equation}
    \end{lemma}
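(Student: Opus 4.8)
The plan is to follow the pattern already used in Lemmas \ref{lem3.2}--\ref{lem3.6}: set $X := T(a+m+b) - T(a) - T(m) - T(b)$, write $X = X_{11} + X_{12} + X_{22}$ with $X_{ij}\in\mathcal{T}_{ij}$, and kill each component by right-multiplying $(a+m+b)$ by a well-chosen element and comparing the two evaluations of $T$. Because the defining identity $T(xy)=T(x)y=xT(y)$ has no $yx$-companion (unlike the generalized Jordan identity of Section 2), no correction terms appear and the bookkeeping is light.

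First I would right-multiply by an arbitrary $t_2\in\mathcal{T}_{22}$. Since $\mathcal{T}_{11}\mathcal{T}_{22}=0$, we have $(a+m+b)t_2 = mt_2 + bt_2$ with $mt_2\in\mathcal{T}_{12}$ and $bt_2\in\mathcal{T}_{22}$. Evaluating $T$ one way gives $T((a+m+b)t_2)=T(a+m+b)t_2$; evaluating the other way and using Lemma \ref{lem3.2}(ii) gives $T(mt_2+bt_2)=T(bt_2)+T(mt_2)=T(b)t_2+T(m)t_2$. Moreover $T(a)t_2 = T(at_2)=T(0)=0$ since $at_2=0$. Combining these, $Xt_2=0$ for every $t_2\in\mathcal{T}_{22}$, hence $X_{12}t_2=0$ and $X_{22}t_2=0$; then condition (b) of Definition \ref{def1} gives $X_{12}=0$ and condition (ii) of Theorem \ref{thm3.1} gives $X_{22}=0$.

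Next I would right-multiply by an arbitrary $n\in\mathcal{T}_{12}$. Since $\mathcal{T}_{12}\mathcal{T}_{12}=0=\mathcal{T}_{22}\mathcal{T}_{12}$ and $\mathcal{T}_{11}\mathcal{T}_{12}\subseteq\mathcal{T}_{12}$, we get $(a+m+b)n = an\in\mathcal{T}_{12}$, so $T(a+m+b)n = T(an)=T(a)n$; also $T(m)n=T(mn)=0$ and $T(b)n=T(bn)=0$. Therefore $Xn=0$, and since $X=X_{11}$ by the previous step this reads $X_{11}n=0$ for all $n\in\mathcal{T}_{12}$, i.e. $X_{11}M=0$, whence $X_{11}=0$ by faithfulness of $M$ as a left $A$-module (condition (a) of Definition \ref{def1}). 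Thus $X=0$, which is the assertion.

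I do not expect a genuine obstacle here once Lemma \ref{lem3.2} is available; the one point needing care is the \emph{direction} of the multiplications. The symmetric left-multiplication argument (using $t_2(a+m+b)=t_2b$ together with $T(xy)=xT(y)$) would require a cancellation law of the form ``$Bb=0\Rightarrow b=0$'', which is not among the hypotheses of Theorem \ref{thm3.1}; hence the whole argument must be organized around right multiplication exactly as above. It is also worth noting that this lemma uses only conditions (a), (b) of Definition \ref{def1} and condition (ii) of Theorem \ref{thm3.1}.
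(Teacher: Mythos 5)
Your proof is correct, and your treatment of the $(12)$- and $(22)$-components via right multiplication by $t_2\in\mathcal{T}_{22}$ is exactly the paper's second step. Where you genuinely diverge is in killing the $(11)$-component: the paper right-multiplies by an arbitrary $a_1\in\mathcal{T}_{11}$, computes $T((a+m+b)a_1)=T(aa_1)=T(a)a_1+T(m)a_1+T(b)a_1$, and then invokes hypothesis (iii) of Theorem \ref{thm3.1} ($aA=0\Rightarrow a=0$) to conclude $X_{11}=0$; you instead right-multiply by $n\in\mathcal{T}_{12}$ and use faithfulness of $M$ as a left $A$-module (condition (a) of Definition \ref{def1}). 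Both arguments are sound, but yours buys something concrete: this lemma is the \emph{only} place in Section 3 where the paper uses condition (iii), so your variant shows that hypothesis (iii) can be dropped from Theorem \ref{thm3.1} entirely. Your closing remark about the direction of multiplication is also on target -- a left-multiplication version would need a cancellation of the form $Bb=0\Rightarrow b=0$, which is not assumed.
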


    \begin{proof}
    	Let $a_{1} \in \mathcal{T}_{11}$. Then
    	\begin{equation}
    		\label{eq:3.33}
    		\begin{aligned}
    			T ( (a+m+b ) a_{1} )=T (a+m+b ) a_{1}.
    		\end{aligned}
    	\end{equation}
    	
    	Also,
    	\begin{equation}
    		\label{eq:3.34}
    		\begin{aligned}
    			T ( (a+m+b ) a_{1} )&=T (a a_{1}+m a_{1}+b a_{1} )\\
    			&=T (a a_{1} )\\
    			&=T (a a_{1} )+T (m a_{1} )+T (b a_{1} )\\
    			&=T (a ) a_{1}+T(m) a_{1}+T(b ) a_{1}.
    		\end{aligned}
    	\end{equation}
    	
    	By \eqref{eq:3.33} and \eqref{eq:3.34},
    	\begin{equation}
    		\label{eq:3.35}
    		\begin{aligned}
    			&[T (a+m+b )-T (a )-T(m)-T (b ) ] a_{1}=0\\
    			& \implies [T (a+m+b)-T (a)-T(m)-T (b)]_{11} a_{1}=0\\
    			& \implies [T (a+m+b)-T (a)-T(m)-T (b)]_{11} =0\\
    			&~(\text{By condition (iii) of Theorem \ref{thm3.1}}).
    		\end{aligned}
    	\end{equation}

    	Let $t_2 \in \mathcal{T}_{22}$. Then
    	\begin{equation}
    		\label{eq:3.36}
    		\begin{aligned}
    			T ( (a+m+b ) t_2 )=T (a+m+b ) t_2
    		\end{aligned}
    	\end{equation}
    	
    	Also,	
    	\begin{equation}
    		\label{eq:3.37}
    		\begin{aligned}
    			T ( (a+m+b ) t_2 )&=T (a t_2+m t_2+b t_2 )\\
    			&=T (m t_2+b t_2 ) \\
    			&=T(m t_2 )+T (b t_2 ) \quad [\text {By (ii) of Lemma \ref{lem3.2} }] \\
    			&=T(a t_2 )+T (m t_2 )+T (b t_2 ) \\
    			&=T(a ) t_2+T(m) t_2+T (b ) t_2.
    		\end{aligned}
    	\end{equation}

    	Comparing \eqref{eq:3.36} and \eqref{eq:3.37},
    	\begin{equation}
    		\label{eq:3.38}
    		\begin{aligned}
    			& [T (a+m+b )-T (a )-T(m)-T(b ) ] t_2=0\\
    			& \implies [T (a+m+b)-T (a)-T(m)-T (b)]_{12}t_2=0\\
    			& \&~[T (a+m+b )-T (a )-T(m)-T (b ) ]_{22}t_2=0\\
    			& \implies [T (a+m+b)-T (a)-T(m)-T (b)]_{12}=0~(\text{By condition (b) of Definition \ref{def1} })\\
    			& \&~[T (a+m+b )-T (a )-T(m)-T (b ) ]_{22}=0~(\text{By condition (ii) of Theorem \ref{thm3.1}}).
    		\end{aligned}
    	\end{equation}
    	
    	Thus, by \eqref{eq:3.35} and \eqref{eq:3.38}, we have
    	$T (a+m+b )=T (a )+T(m)+T(b )$.
    \end{proof}

    Now, we are ready to prove Theorem \ref{thm3.1}.

    \begin{proof}[Proof of Theorem \ref{thm3.1}]
    	Let $t \in \mathcal{T}$ and $u \in \mathcal{T}$. Then $t = t_{11} + t_{12} + t_{22}$ and $u = u_{11}+ u_{12} + u_{22}$ where $t_{ij}, u_{ij}\in \mathcal{T}_{ij}$ and $i,~j \in \{1,2\}$. We have
    	\begin{equation}
    		\begin{aligned}
    			T(t+u) &= T((t_{11} + t_{12} + t_{22}) + (u_{11} + u_{12} + u_{22}))\\
    			&=T((t_{11} + u_{11}) + (t_{12} + u_{12}) + (t_{22} + u_{22}))\\
    			&=T(t_{11} + u_{11}) + T(t_{12} + u_{12})  + T(t_{22} + u_{22})~(\text{By Lemma \ref{lem3.7}})\\
    			&=T(t_{11}) + T(u_{11}) + T(t_{12}) + T(u_{12}) + T(t_{22}) +T(u_{22})\\
    			&~(\text{By Lemma \ref{lem3.5} and \ref{lem3.6}})\\
    			&=T(t_{11}) + T(t_{12})+ T(t_{22}) + T(u_{11})  + T(u_{12}) +T(u_{22})\\
    			&=T(t_{11}+ t_{12} + t_{22}) + T(u_{11}+ u_{12} + u_{22})~(\text{By Lemma \ref{lem3.7}}) \\
    			&=T(t) + T(u).\\
    		\end{aligned}
    	\end{equation}

    	Hence, $T$ is additive and $T$ is a two-sided centralizer.
    \end{proof}

    Now, we provide an example which shows that every triangular ring $\mathcal{T}$ is not always a prime ring.
    \begin{ex}
    	Let $\mathcal{T}= \left( \begin{array}{ccc}
    		2\mathbb{Z} &  \mathbb{Z} \\
    		& 3\mathbb{Z} \end{array}  \right)$. Then
    	\begin{align*}
    		\left( \begin{array}{ccc}
    			0 &  1 \\
    			& 0 \end{array}  \right)
    		\left( \begin{array}{ccc}
    			2\mathbb{Z} &  \mathbb{Z} \\
    			& 3\mathbb{Z} \end{array}  \right)
    		\left( \begin{array}{ccc}
    			0 &  1 \\
    			& 0 \end{array}  \right)= \left( \begin{array}{ccc}
    			0 &  0 \\
    			& 0 \end{array}  \right),
    	\end{align*}
    	but $\left( \begin{array}{ccc}
    		0 &  1 \\
    		& 0 \end{array}  \right) \neq \left( \begin{array}{ccc}
    		0 &  0 \\
    		& 0 \end{array}  \right)$. Therefore, $\mathcal{T}$ is not a semi-prime ring and hence not a prime ring.
    \end{ex}

    \section{$(m,n)$-DERIVATIONS ON TRIANGULAR RINGS}
    \begin{theorem}
    	\label{der3}
    	Let $m>0,n>0$ be integers and $\mathcal{T}$ be a triangular ring with
    	conditions:
    	(i) $aA=0$ for some $a\in A$ implies $a=0$;
    	
    	(ii) $bB=0$ for some $b\in B$ implies $b=0$;
    	
    	(iii) $Aa=0$ for some $a\in A$ implies $a=0$;
    	
    	(iv) $\mathcal{T}$ is $mn(m+n)$-torsion free.\\
    	
    	If a mapping $D:\mathcal{T}  \rightarrow \mathcal{T}$ satisfies
    	\begin{align*}
    		(m+n) D(ab)=2 m D(a) b+2 n a D(b)
    	\end{align*}
    	for all $a,b\in \mathcal{T}$, then $D$ is additive. Moreover, $D$ is a $(m,n)$-derivation.
    \end{theorem}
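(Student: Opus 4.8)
The plan is to reuse, essentially line for line, the block-decomposition argument used for Theorem~\ref{thm3.1}: the hypotheses here are of the same kind (no identities in $A$ or $B$, three one-sided faithfulness conditions), so the same scheme should apply. Write $\mathcal{T}=\mathcal{T}_{11}\oplus\mathcal{T}_{12}\oplus\mathcal{T}_{22}$ and prove additivity of $D$ through a chain of lemmas, one for each shape of sum, then assemble them. First note that $a=b=0$ gives $D(0)=0$, and that once $D$ is additive it satisfies the defining relation of an $(m,n)$-derivation by hypothesis, so the ``Moreover'' clause needs no separate proof.

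Concretely I would establish, in order: (1)~$D(a+p)=D(a)+D(p)$ and $D(b+p)=D(b)+D(p)$ for $a\in\mathcal{T}_{11}$, $b\in\mathcal{T}_{22}$, $p\in\mathcal{T}_{12}$; (2)~$D(ap+qb)=D(ap)+D(qb)$, using the single product identity $ap+qb=(a+q)(p+b)$ together with~(1); (3)~$D(pb+qb)=D(pb)+D(qb)$ for $p,q\in\mathcal{T}_{12}$, $b\in\mathcal{T}_{22}$, and then $D(p+q)=D(p)+D(q)$ for $p,q\in\mathcal{T}_{12}$ --- this pair being where the lack of identities forces the longer route, exactly as in Lemmas~\ref{lem3.4}--\ref{lem3.5}; (4)~$D(a_1+a_2)=D(a_1)+D(a_2)$ and $D(b_1+b_2)=D(b_1)+D(b_2)$; (5)~$D(a+p+b)=D(a)+D(p)+D(b)$; and finally $D(t+u)=D(t)+D(u)$ by decomposing $t$ and $u$ and applying (5), (4), (3), exactly as in the proof of Theorem~\ref{thm3.1}. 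Every lemma runs on the same engine: choose a product $xy$ in which $x$ or $y$ is a sum of homogeneous pieces, compute $D(xy)$ in two ways --- once by applying $(m+n)D(xy)=2mD(x)y+2nxD(y)$ directly to $x,y$, and once by expanding $xy$ into a sum of products lying in different $\mathcal{T}_{ij}$ and applying the relation termwise (using $t_{ij}t_{kl}=0$ for $j\ne k$, which kills most of the products that appear) --- then equate and feed the outcome through the faithfulness conditions (i)--(iii) of Theorem~\ref{der3} and conditions (a),(b) of Definition~\ref{def1}.

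The genuinely new feature compared with the centralizer case is that the relation is weighted, so each comparison produces an identity of the form $c\,[D(\cdot)-D(\cdot)-D(\cdot)]\,t=0$ (or $c\,t\,[\cdots]=0$) with an integer coefficient $c$ in front --- usually $2m$, $2n$ or $m+n$, and in steps (2)--(3) also products of these, which arise when one re-associates a triple product such as $D((aq)b)=D(a(qb))$ and then has to cancel cross terms using the fact that $xy=0$ forces $2mD(x)y+2nxD(y)=0$. Since $mn(m+n)$ is even and divisible by both $m$ and $n$, hypothesis~(iv) makes $\mathcal{T}$ torsion free by $2m$, $2n$, $m+n$, and in fact by every divisor of $2\,mn(m+n)$, which is exactly what is needed to strip $c$ off at each stage. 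The main obstacle is therefore organizational rather than conceptual: in steps~(2) and~(3) one must arrange the triple-product re-associations so that the value of $D$ at the ``new'' argument cancels completely and the coefficient left in front of the residual term is one that~(iv) controls --- say $m+n$, $2m$ or $2n$ --- rather than an uncontrolled one like $m-n$. The zero-product identities above are precisely what make that cancellation come out right; once it does, the faithfulness conditions close each lemma, and the final assembly is as for Theorem~\ref{thm3.1}.
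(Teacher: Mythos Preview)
Your plan is correct and matches the paper's proof essentially step for step: the paper proves exactly your items (1)--(5) as Lemmas~\ref{lem4.2}--\ref{lem4.7}, each by the same ``compute $(m+n)D(xy)$ two ways and cancel the integer coefficient via~(iv)'' mechanism, and then assembles them just as you describe. The only cosmetic difference is that in step~(2) the paper writes $ap+qb=(a+q)(p+b)+(p+b)(a+q)$ rather than your single product $(a+q)(p+b)$, but since $(p+b)(a+q)=0$ these are the same identity.
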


    In this section, we frequently use all four conditions for the triangular ring $\mathcal{T}$, the condition on the map $D$ described in Theorem \ref{der3} without mentioning them. Note that $D(0)=0$. Before proving Theorem \ref{der3}, we have some lemmas.

    \begin{lemma}
    	\label{lem4.2}
    	Let $a \in \mathcal{T}_{11}, p \in \mathcal{T}_{12}$ and $b \in \mathcal{T}_{22}$. Then
    	\begin{equation}
    		\begin{aligned}
    			& (i)~ D (a+p )=D (a )+D(p)\\
    			&	(ii)~ D (b+p )=D (b )+D(p).
    		\end{aligned}
    	\end{equation}
    \end{lemma}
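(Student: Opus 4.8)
The plan is to imitate the component-by-component argument of Lemma~\ref{lem3.2}. Put $\Delta=D(a+p)-D(a)-D(p)$ and write $\Delta=\Delta_{11}+\Delta_{12}+\Delta_{22}$ with $\Delta_{ij}\in\mathcal{T}_{ij}$; the goal is to kill each $\Delta_{ij}$ by multiplying $a+p$ by a suitable element of some $\mathcal{T}_{kl}$ and playing the defining identity $(m+n)D(xy)=2mD(x)y+2nxD(y)$ off against itself on the three products $(a+p)y$, $ay$, $py$, using $D(0)=0$ and $t_{ij}t_{kl}=0$ for $j\neq k$. Throughout I will freely divide by $2m$ and by $2n$: at least one of $m,n,m+n$ is even, so every prime factor of $2m$ and of $2n$ divides $mn(m+n)$, and hence $mn(m+n)$-torsion-freeness of $\mathcal{T}$ upgrades to $2m$- and $2n$-torsion-freeness.

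For (i), first take $y=t_{2}\in\mathcal{T}_{22}$. Then $at_{2}=0$ and $(a+p)t_{2}=pt_{2}$, so $D((a+p)t_{2})=D(pt_{2})$ and $D(at_{2})=0$. Expanding $(m+n)D(pt_{2})$ in the two available ways, the terms $2n(a+p)D(t_{2})$ split as $2naD(t_{2})+2npD(t_{2})$, the $2npD(t_{2})$ terms cancel, and the identity $0=(m+n)D(at_{2})=2mD(a)t_{2}+2naD(t_{2})$ eliminates $2naD(t_{2})$; what survives is $2m\Delta t_{2}=0$. Since $\mathcal{T}_{11}\mathcal{T}_{22}=0$ this forces $\Delta_{12}t_{2}=0$ and $\Delta_{22}t_{2}=0$ for all $t_{2}$, whence $\Delta_{12}=0$ by condition (b) of Definition~\ref{def1} and $\Delta_{22}=0$ by condition (ii) of Theorem~\ref{der3}. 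Next take $y=n_{12}\in\mathcal{T}_{12}$: now $pn_{12}=0$ and $(a+p)n_{12}=an_{12}$, and the same bookkeeping, using $0=(m+n)D(pn_{12})$ to remove the one surviving inner term, gives $2m\Delta n_{12}=0$, i.e. $2m\Delta_{11}n_{12}=0$; hence $\Delta_{11}M=0$ and $\Delta_{11}=0$ by condition (a) of Definition~\ref{def1}. So $\Delta=0$.

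For (ii) the clean products come from multiplying $b+p$ on the \emph{left}. With $t_{1}\in\mathcal{T}_{11}$ on the left, $t_{1}b=0$ and $t_{1}(b+p)=t_{1}p$, and the analogous three-term cancellation, this time removing $2nt_{1}D(b)$ via $0=(m+n)D(t_{1}b)$, yields $2nt_{1}\Delta'=0$ where $\Delta'=D(b+p)-D(b)-D(p)$; as $\mathcal{T}_{11}\mathcal{T}_{22}=0$ this splits into $t_{1}\Delta'_{11}=0$ and $t_{1}\Delta'_{12}=0$ for all $t_{1}$, so $\Delta'_{11}=0$ by condition (iii) of Theorem~\ref{der3} and $\Delta'_{12}=0$ by condition (b) of Definition~\ref{def1}. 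The remaining piece $\Delta'_{22}$ is the subtle one. Here I would multiply $b+p$ on the left by $p'\in\mathcal{T}_{12}$: since $\mathcal{T}_{12}\mathcal{T}_{12}=0$, the summand $p'p$ disappears and $p'(b+p)=p'b$, so the same manipulation gives $2np'\Delta'=0$; because $p'\Delta'_{11}=p'\Delta'_{12}=0$, this reads $2n\,p'\Delta'_{22}=0$, i.e. $M\Delta'_{22}=0$, and faithfulness of $M$ as a right $B$-module (condition (a) of Definition~\ref{def1}) gives $\Delta'_{22}=0$. Hence $\Delta'=0$.

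The hard part is precisely this $\mathcal{T}_{22}$-component in (ii). The reflexive move --- multiplying $b+p$ on the \emph{right} by $t_{2}\in\mathcal{T}_{22}$ --- produces $(b+p)t_{2}=bt_{2}+pt_{2}$, a sum straddling $\mathcal{T}_{22}$ and $\mathcal{T}_{12}$ whose $D$-value cannot yet be broken up, while left multiplication by $\mathcal{T}_{22}$ only yields $B\Delta'_{22}=0$, a left annihilator that Theorem~\ref{der3} does not control (it provides $bB=0\Rightarrow b=0$, not $Bb=0\Rightarrow b=0$). The way out is to left-multiply by $\mathcal{T}_{12}$ instead, exploiting $\mathcal{T}_{12}\mathcal{T}_{12}=0$ to erase the $p$-contribution, and then to invoke faithfulness of the bimodule rather than any condition on $B$. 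Everything else is routine: keeping track of the torsion constants, and choosing at each component which of conditions (a),(b) of Definition~\ref{def1} and (i)--(iii) of Theorem~\ref{der3} to invoke.
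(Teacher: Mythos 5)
Your proof is correct, and for part (i) it is essentially the paper's argument: compare the two expansions of $(m+n)D((a+p)y)$ for $y$ ranging over a corner of $\mathcal{T}$, cancel, divide by $2m$, and kill each component of $D(a+p)-D(a)-D(p)$ by the appropriate annihilation hypothesis. (The only cosmetic difference in (i) is that the paper eliminates the $(1,1)$-component by right-multiplying by $\mathcal{T}_{11}$ and invoking condition (i) of Theorem \ref{der3}, whereas you right-multiply by $\mathcal{T}_{12}$ and invoke faithfulness of $M$ as a left $A$-module; both work.) Where your write-up genuinely adds value is part (ii), which the paper disposes of with ``similarly.'' You are right that the literal mirror image of (i) fails there: right multiplication by $\mathcal{T}_{22}$ produces the mixed element $bt_2+pt_2$ whose $D$-value cannot yet be split, and left multiplication by $\mathcal{T}_{22}$ only yields $B\Delta'_{22}=0$, which none of the hypotheses controls (the theorem assumes $bB=0\Rightarrow b=0$, not $Bb=0\Rightarrow b=0$). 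Your fix --- left multiplication by $\mathcal{T}_{12}$, using $\mathcal{T}_{12}\mathcal{T}_{12}=0$ to erase the $p$-contribution and then right-faithfulness of $M$ to conclude $\Delta'_{22}=0$ --- is exactly the move the same authors use elsewhere (e.g.\ in the proof of Lemma \ref{lem4.4}) and is essentially forced by the hypotheses. Your justification that $mn(m+n)$-torsion-freeness upgrades to $2m$- and $2n$-torsion-freeness (every prime factor of $2m$ and $2n$ divides $mn(m+n)$, since one of $m$, $n$, $m+n$ is even) is also more careful than the paper, which divides by $2m$ and $2n$ citing only condition (iv).
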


    \begin{proof}
    	Let $c\in \mathcal{T}_{22}$. Then
    	\begin{equation}
    		\label{eq:4.2}
    		\begin{aligned}
    			(m+n) D ( (a+p ) c )=2 m D (a+p ) c+2 n (a+p ) D (c).
    		\end{aligned}
    	\end{equation}
    	
    	Also,
    	\begin{equation}
    		\label{eq:4.3}
    		\begin{aligned}
    			(m+n) D ( (a+p ) c )&=(m+n) D (p c )\\
    			&=(m+n) [D (a c )+D (p c ) ]\\
    			&=2 m D (a ) c+2 n a D (c )+2 m D(p) c+2 n p D (c ).
    		\end{aligned}
    	\end{equation}
    	
    	Comparing \eqref{eq:4.2} and \eqref{eq:4.3},
    	\begin{equation}
    		\label{eq:4.4}
    		\begin{aligned}
    			& 2 m [D (a+p )-D (a )-D(p) ] c=0\\
    			& \implies [D (a+p )-D (a )-D(p) ] c=0~(\text{By condition (iv) of Theorem \ref{der3}}) \\
    			& \implies [D (a+p )-D (a )-D(p) ]_{12} c=0\\
    			& \&~[D (a+p )-D (a )-D(p) ]_{22} c=0\\
    			& \implies [D (a+p )-D (a )-D(p) ]_{12}=0~(\text{By condition (b) of Definition \ref{def1}}) \\
    			& \&~[D (a+p )-D (a )-D(p) ]_{22}=0~(\text{By condition (ii) of Theorem \ref{der3}}).
    		\end{aligned}
    	\end{equation}

    	Let $a_{1} \in \mathcal{T}_{11}$. Then
    	\begin{equation}
    		\label{eq:4.5}
    		\begin{aligned}
    			(m+n) D ( (a+p ) a_{1} )
    			=2 m D (a+p ) a_{1}+2 n (a+p ) D (a_{1} ).
    		\end{aligned}
    	\end{equation}
    	
    	Also,
    	\begin{equation}
    		\label{eq:4.6}
    		\begin{aligned}
    			(m+n) D ( (a+p ) a_{1} )&=(m+n) D (a a_{1} )\\
    			&=(m+n) [D (a a_{1} )+D (p a_{1} ) ]\\
    			&=2 m D (a ) a_{1}+2 n a D (a_{1} )+2 m D(p) a_{1}+2 n p D (a_{1} ).
    		\end{aligned}
    	\end{equation}
    	
    	Comparing \eqref{eq:4.5} and \eqref{eq:4.6}, we have
    	\begin{equation}
    		\label{eq:4.7}
    		\begin{aligned}
    			& 2 m [D (a+p )-D (a )-D(p) ] a_{1}=0\\
    			& \implies [D (a+p )-D (a )-D(p) ] a_{1}=0~(\text{By condition (iv) of Theorem \ref{der3}}) \\
    			& \implies [D (a+p )-D (a )-D(p) ]_{11} a_{1}=0 \\
    			& \implies [D (a+p )-D (a )-D(p) ]_{11}=0~(\text{By condition (i) of Theorem \ref{der3}}).
    		\end{aligned}
    	\end{equation}
    	
    	By \eqref{eq:4.4} and \eqref{eq:4.7}, we have $(i)$ of Lemma \ref{lem4.2}. Similarly, we can prove $(ii)$ of Lemma \ref{lem4.2}.	
    \end{proof}

    \begin{lemma}
    	\label{lem4.3}
    	Let $a\in \mathcal{T}_{11}$, $b\in \mathcal{T}_{22}$ and $p,q\in \mathcal{T}_{12}$. Then
    	\begin{equation}
    		D(ap+qb)=D(ap)+D(qb).
    	\end{equation}
    \end{lemma}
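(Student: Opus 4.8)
The plan is to reuse the product trick from Lemma~\ref{lem3.3}. Since $a\in\mathcal{T}_{11}$, $b\in\mathcal{T}_{22}$ and $p,q\in\mathcal{T}_{12}$, the off-diagonal products satisfy $ab=0$ and $qp=0$, so a direct check gives
\[
(a+q)(p+b)=ap+ab+qp+qb=ap+qb .
\]
Hence $D(ap+qb)=D\bigl((a+q)(p+b)\bigr)$, and applying the defining identity of $D$ to the factorization $(a+q)(p+b)$ yields
\[
(m+n)D(ap+qb)=2mD(a+q)(p+b)+2n(a+q)D(p+b).
\]

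Next I would use Lemma~\ref{lem4.2}(i) to replace $D(a+q)$ by $D(a)+D(q)$ and Lemma~\ref{lem4.2}(ii) to replace $D(p+b)$ by $D(p)+D(b)$, and then multiply out both products. The resulting eight summands split into four natural pairs: $\{2mD(a)p+2naD(p)\}=(m+n)D(ap)$, $\{2mD(q)b+2nqD(b)\}=(m+n)D(qb)$, $\{2mD(a)b+2naD(b)\}=(m+n)D(ab)$, and $\{2mD(q)p+2nqD(p)\}=(m+n)D(qp)$. Because $ab=qp=0$, the last two pairs each equal $(m+n)D(0)=0$, so we are left with $(m+n)D(ap+qb)=(m+n)D(ap)+(m+n)D(qb)$.

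Finally, from $(m+n)\bigl[D(ap+qb)-D(ap)-D(qb)\bigr]=0$ together with the observation that $mn(m+n)$-torsion freeness (condition (iv) of Theorem~\ref{der3}) forces $(m+n)$-torsion freeness, we conclude $D(ap+qb)=D(ap)+D(qb)$. I do not expect a genuine obstacle in this lemma: the only points requiring care are verifying that the cross products $ab$ and $qp$ really vanish (so that the factorization collapses to exactly $ap+qb$), and the bookkeeping that isolates the two ``$D(0)$'' pairs among the eight expanded terms. The torsion hypothesis is precisely what allows us to pass from the scaled equality to the desired additivity.
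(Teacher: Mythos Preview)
Your proposal is correct and follows essentially the same route as the paper: factor $ap+qb=(a+q)(p+b)$, apply the defining identity for $D$, split $D(a+q)$ and $D(p+b)$ via Lemma~\ref{lem4.2}, regroup the eight terms into $(m+n)D(ap)+(m+n)D(qb)+(m+n)D(ab)+(m+n)D(qp)$, drop the last two since $ab=qp=0$, and cancel $(m+n)$ by torsion-freeness. The only cosmetic difference is that the paper writes the factorization as $(a+q)(p+b)+(p+b)(a+q)$, but since $(p+b)(a+q)=0$ this adds nothing and the subsequent computation is identical to yours.
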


    \begin{proof}
    	Since $(ap+qb)=(a+q)(p+b)+(p+b)(a+q)$,
    	
    	\begin{equation}
    		\label{eq:4.9}
    		\begin{aligned}
    			(m+n)D(ap+qb)&=D((a+q)(p+b)+(p+b)(a+q))\\
    			&=2mD(a+q)(p+b)+2n(a+q)D(p+b)\\
    			&=2m(D(a)+D(q))(p+b)+2n(a+q)(D(p)+D(b))\\
    			&~(\text{By Lemma \ref{lem4.2}})\\
    			&=2mD(a)p+2naD(p)+2mD(q)b+2nqD(b)\\
    			&+2mD(q)p+2nqD(p)+2mD(a)b+2naD(b)\\
    			&=(m+n)[D(ap)+D(qb)+D(qp)+D(ab)]\\
    			&=(m+n)[D(ap)+D(qb)].
    		\end{aligned}
    	\end{equation}
    	
    	Using condition $(iv)$ of Theorem \ref{der3} and the above identity \eqref{eq:4.9}, we get the desired result.
    \end{proof}

    \begin{lemma}
    	\label{lem4.4}
    	Let $b \in \mathcal{T}_{22}$ and $p,q \in \mathcal{T}_{12}$. Then
    	\begin{equation}
    		D (p b+q b )=D (p b )+D (q b ) .
    	\end{equation}
    \end{lemma}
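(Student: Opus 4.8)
The plan is to show that the element $\Delta := D(pb+qb)-D(pb)-D(qb)$ of $\mathcal{T}$ is zero by proving that each of its components $\Delta_{11},\Delta_{12},\Delta_{22}$ vanishes. Observe first that $pb,qb\in\mathcal{T}_{12}$, so $pb$, $qb$ and $pb+qb$ all lie in $\mathcal{T}_{12}$, and $\Delta=\Delta_{11}+\Delta_{12}+\Delta_{22}$ with respect to $\mathcal{T}=\mathcal{T}_{11}\oplus\mathcal{T}_{12}\oplus\mathcal{T}_{22}$.

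\textbf{Step 1 ($\Delta_{11}=\Delta_{12}=0$).} Fix $a\in\mathcal{T}_{11}$. Since $a(pb+qb)=a\cdot(pb)+(aq)\cdot b$ is the sum of an element of $\mathcal{T}_{11}\mathcal{T}_{12}$ and an element of $\mathcal{T}_{12}\mathcal{T}_{22}$, Lemma~\ref{lem4.3} (applied with $a$, the $\mathcal{T}_{12}$-element $pb$, the $\mathcal{T}_{12}$-element $aq$, and $b$) gives
\[
D\bigl(a(pb+qb)\bigr)=D(a\cdot pb)+D(a\cdot qb).
\]
Applying the defining identity of $D$ to $a(pb+qb)$ and to each of $a\cdot pb$, $a\cdot qb$, one gets
\[
(m+n)D\bigl(a(pb+qb)\bigr)=2mD(a)(pb+qb)+2naD(pb+qb)
\]
and, summing the two identities for $a\cdot pb$ and $a\cdot qb$,
\[
(m+n)\bigl[D(a\cdot pb)+D(a\cdot qb)\bigr]=2mD(a)(pb+qb)+2na\bigl[D(pb)+D(qb)\bigr].
\]
Comparing, the $D(a)$-terms cancel and we obtain $2na\Delta=0$, hence $a\Delta=0$ for every $a\in\mathcal{T}_{11}$ by condition (iv) of Theorem~\ref{der3}. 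Since $a\Delta_{22}=0$ automatically, the direct sum decomposition forces $a\Delta_{11}=0$ and $a\Delta_{12}=0$ for all $a\in\mathcal{T}_{11}$; therefore $\Delta_{11}=0$ by condition (iii) of Theorem~\ref{der3} and $\Delta_{12}=0$ by condition (b) of Definition~\ref{def1}.

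\textbf{Step 2 ($\Delta_{22}=0$).} Fix $t\in\mathcal{T}_{12}$. Then $t(pb+qb)=t(pb)=t(qb)=0$ because $\mathcal{T}_{12}\mathcal{T}_{12}=0$. Applying the defining identity of $D$ to each of these three (zero) products and comparing exactly as above yields $2nt\Delta=0$, so $t\Delta=0$ by condition (iv). As $t\Delta_{11}=t\Delta_{12}=0$, this says $t\Delta_{22}=0$ for every $t\in\mathcal{T}_{12}$, and the faithfulness of $M$ as a right $B$-module (condition (a) of Definition~\ref{def1}) gives $\Delta_{22}=0$. Combining the two steps, $\Delta=0$, which is the assertion.

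I do not expect a genuine obstacle here: the computations are routine variants of those in Lemmas~\ref{lem4.2} and~\ref{lem4.3}. The only points that need care are recasting $a(pb+qb)$ into precisely the bimodule shape demanded by Lemma~\ref{lem4.3}, and the torsion bookkeeping behind ``$2na\Delta=0\Rightarrow a\Delta=0$'' (which is legitimate since, multiplying by $m(m+n)$, one gets $mn(m+n)(2\Delta\text{-type element})=0$, and $mn(m+n)$ is even, so $mn(m+n)$-torsion-freeness also provides $2$-torsion-freeness).
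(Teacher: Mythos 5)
Your proof is correct and follows essentially the same route as the paper's: left-multiplying by elements of $\mathcal{T}_{11}$ and invoking Lemma~\ref{lem4.3} via the decomposition $a(pb+qb)=a(pb)+(aq)b$ to kill the $11$- and $12$-components, then left-multiplying by elements of $\mathcal{T}_{12}$ to kill the $22$-component, citing the same hypotheses at each stage. Your closing remark explaining why $mn(m+n)$-torsion-freeness permits cancelling the factor $2n$ is a welcome explicit justification of a step the paper leaves implicit.
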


    \begin{proof}
    	Let $a \in \mathcal{T}_{11}$. Then
    	\begin{equation}
    		\label{eq:4.11}
    		\begin{aligned}
    			(m+n) D [a ((p+q) b ) ]=2 m D (a )(p+q) b+2 n a D ((p+q) b ).
    		\end{aligned}
    	\end{equation}
    	
    	Also,
    	\begin{equation}
    		\label{eq:4.12}
    		\begin{aligned}
    			(m+n) D [a ((p+q) b ) ]&=(m+n) D [a (p b )+ (a q ) b ]\\
    			&=(m+n)[D (ap b )+D(aq b)]~(\text{By Lemma \ref{lem4.3}})\\
    			&=2 m D (a ) (p b )+2 n a D (p b )+2 m D (a ) (q b )+2 n a D (q b ).
    		\end{aligned}
    	\end{equation}
    	
    	Comparing \eqref{eq:4.11} and \eqref{eq:4.12}, we have
    	\begin{equation}
    		\label{eq:4.13}
    		\begin{aligned}
    			& 2 n a [D ((p+q) b )-D (p b )-D (q b ) ]=0\\
    			& \implies a [D ((p+q) b )-D (p b )-D (q b ) ]=0~(\text{By condition (iv) of Theorem \ref{der3}})\\
    			& \implies a [D (pb+qb )-D (p b )-D (q b ) ]_{11}=0 \\
    			& \&~a [D ((pb+qb )-D (p b )-D (q b ) ]_{12}=0\\
    			& \implies  [D (pb+qb )-D (p b )-D (q b ) ]_{11}=0~(\text{By condition (iii) of Theorem \ref{der3}}) \\
    			& \&~ [D ((pb+qb )-D (p b )-D (q b ) ]_{12}=0~(\text{By condition (b) of Definition \ref{def1}}).
    		\end{aligned}
    	\end{equation}

    	Let $s \in \mathcal{T}_{12}$. Then
    	\begin{equation}
    		\label{eq:4.14}
    		\begin{aligned}
    			(m+n) D [s (pb+qb ) ]= 2 m D(s) (pb+qb ) +2 n s D (pb+q b ).
    		\end{aligned}
    	\end{equation}
    	
    	Also,
    	\begin{equation}
    		\label{eq:4.15}
    		\begin{aligned}
    			(m+n) D [s (pb+qb ) ]&=(m+n) D(0)\\
    			&=(m+n)D(spb)+(m+n)D(sqb)\\
    			&=2 m D(s) p b+2 n s D (p b )+2 m D(s) q b+2ns D (q b ).
    		\end{aligned}
    	\end{equation}
    	
    	Comparing \eqref{eq:4.14} and \eqref{eq:4.15},
    	\begin{equation}
    		\label{eq:4.16}
    		\begin{aligned}
    			& 2n s [D(pb+qb)-D(pb)-D(qb)]=0\\
    			& \implies s [D(pb+qb)-D(pb)-D(qb)]=0~(\text{By condition (iv) of Theorem \ref{der3}})\\
    			& \implies s [D(pb+qb)-D(pb)-D(qb)]_{22}=0\\
    			& \implies  [D(pb+qb)-D(pb)-D(qb)]_{22}=0~(\text{By condition (a) of Definition \ref{def1}}).
    		\end{aligned}
    	\end{equation}
    	
    	By \eqref{eq:4.13} and \eqref{eq:4.16}, we have the desired result.
    \end{proof}

    \begin{lemma}
    	\label{lem4.5}
    	Let $p,q \in \mathcal{T}_{12}$. Then
    	\begin{equation}
    		D(p+q)=D(p)+D(q).
    	\end{equation}
    \end{lemma}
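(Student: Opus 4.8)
The plan is to follow the template of Lemma~\ref{lem4.4} (and its centralizer analogue in the previous section): write $\Delta := D(p+q)-D(p)-D(q)$ and kill its three components $\Delta_{11},\Delta_{12},\Delta_{22}$ one block at a time by multiplying on the right by suitably chosen elements and then invoking the faithfulness conditions, using condition~(iv) each time to strip off the integer coefficients.

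First I would fix $b\in\mathcal{T}_{22}$ and evaluate $(m+n)D((p+q)b)$ in two ways. The defining identity with arguments $p+q$ and $b$ gives $2mD(p+q)b+2n(p+q)D(b)$. On the other hand $(p+q)b=pb+qb$, so Lemma~\ref{lem4.4} together with the defining identity applied to $pb$ and to $qb$ gives $(m+n)[D(pb)+D(qb)]=2m(D(p)+D(q))b+2n(p+q)D(b)$. Subtracting yields $2m\,\Delta\,b=0$, and since $\mathcal{T}$ is $mn(m+n)$-torsion free we may cancel $2m$, obtaining $\Delta b=0$ for every $b\in\mathcal{T}_{22}$. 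As $\Delta_{11}\mathcal{T}_{22}=0$ automatically, this forces $\Delta_{12}b=0$ and $\Delta_{22}b=0$ for all such $b$, whence $\Delta_{12}=0$ by condition~(b) of Definition~\ref{def1} and $\Delta_{22}=0$ by condition~(ii) of Theorem~\ref{der3}.

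Next I would fix $s\in\mathcal{T}_{12}$. Since $\mathcal{T}_{12}\mathcal{T}_{12}=0$ we have $(p+q)s=ps=qs=0$, so $D((p+q)s)=D(ps)=D(qs)=0$; applying the defining identity to each of $(p+q)s,\ ps,\ qs$ gives $0=2mD(p+q)s+2n(p+q)D(s)$, $0=2mD(p)s+2npD(s)$, and $0=2mD(q)s+2nqD(s)$. Subtracting the last two from the first leaves $2m\,\Delta\,s=0$, and cancelling $2m$ by condition~(iv) gives $\Delta s=0$ for all $s\in\mathcal{T}_{12}$; as $\Delta_{12}s=\Delta_{22}s=0$ trivially, this yields $\Delta_{11}s=0$ for all $s$, so $\Delta_{11}=0$ by condition~(a) of Definition~\ref{def1}. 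Combining the two steps gives $\Delta=0$, i.e.\ $D(p+q)=D(p)+D(q)$.

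The computation is routine, in exact parallel with the earlier lemmas of this section. The only points needing care are the repeated use of the $mn(m+n)$-torsion-freeness to discard the coefficient $2m$, the observation that $(p+q)s=0$ for $s\in\mathcal{T}_{12}$ (which is what makes Step~2 collapse so cleanly), and keeping straight which faithfulness condition applies to each matrix block; I do not anticipate any genuine obstacle beyond this bookkeeping.
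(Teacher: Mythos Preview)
Your proposal is correct and follows essentially the same argument as the paper's own proof: the same two-step scheme (right-multiply by $b\in\mathcal{T}_{22}$ using Lemma~\ref{lem4.4} to kill $\Delta_{12}$ and $\Delta_{22}$, then right-multiply by $s\in\mathcal{T}_{12}$ exploiting $\mathcal{T}_{12}\mathcal{T}_{12}=0$ to kill $\Delta_{11}$), the same appeals to condition~(iv) to strip $2m$, and the same faithfulness conditions. The only cosmetic difference is that in the second step you write out three separate instances of the defining identity and subtract, whereas the paper packages this as computing $(m+n)D((p+q)s)$ in two ways; the content is identical.
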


    \begin{proof}
    	Let $b \in \mathcal{T}_{22}$. Then
    	\begin{equation}
    		\label{eq:4.18}
    		\begin{aligned}
    			(m+n) D ((p+q) b )=2mD(p+q)b+2n(p+q)D(b).
    		\end{aligned}
    	\end{equation}
    	
    	Also,
    	\begin{equation}
    		\label{eq:4.19}
    		\begin{aligned}
    			(m+n) D ((p+q) b )&=(m+n) (D (p b )+D (q b ) )~(\text{By Lemma \ref{lem4.4}})\\
    			&=2 m D(p) b+2 n p D (b )+2 m D(q) b+2 n q D (b ).
    		\end{aligned}
    	\end{equation}
    	
    	Comparing \eqref{eq:4.18} and \eqref{eq:4.19},
    	\begin{equation}
    		\label{eq:4.20}
    		\begin{aligned}
    			& 2 m[D(p+q)-D(p)-D(q)] b=0\\
    			& \implies [D(p+q)-D(p)-D(q)] b=0~(\text{By condition (iv) of Theorem \ref{der3}})\\
    			& \implies [D(p+q)-D(p)-D(q)]_{12} b=0\\
    			& \& ~ [D(p+q)-D(p)-D(q)]_{22} b=0\\
    			& \implies [D(p+q)-D(p)-D(q)]_{12} =0~(\text{By condition (b) of Definition \ref{def1}})\\
    			& \& ~ [D(p+q)-D(p)-D(q)]_{22} =0~(\text{By condition (ii) of Theorem \ref{der3}}).
    		\end{aligned}
    	\end{equation}

    	Let $s \in \mathcal{T}_{12}$. Then
    	\begin{equation}
    		\label{eq:4.21}
    		\begin{aligned}
    			(m+n) D((p+q) s)=2 m D(p+q) s+2 n(p+q) D(s).
    		\end{aligned}
    	\end{equation}
    	
    	Also,
    	\begin{equation}
    		\label{eq:4.22}
    		\begin{aligned}
    			(m+n) D((p+q) s)&=(m+n) D(0)\\
    			&=(m+n)(D(p s)+D(q s))\\
    			&=2 m D(p) s+2 n p D(s)+2 m D(q) s+2 n q D(s).
    		\end{aligned}
    	\end{equation}
    	
    	By \eqref{eq:4.21} and \eqref{eq:4.22},
    	\begin{equation}
    		\label{eq:4.23}
    		\begin{aligned}
    			& 2 m[D(p+q)-D(p)-D(q)] s=0\\
    			& \implies [D(p+q)-D(p)-D(q)] s=0~(\text{By condition (iv) of Theorem \ref{der3}})\\
    			& \implies [D(p+q)-D(p)-D(q)]_{11} s=0\\
    			& \implies [D(p+q)-D(p)-D(q)]_{11} =0~(\text{By condition (a) of Definition \ref{def1}}).
    		\end{aligned}
    	\end{equation}
    	
    	By \eqref{eq:4.20} and \eqref{eq:4.23}, we get the desired result.
    \end{proof}

    \begin{lemma}
    	\label{lem4.6}
    	Let $a_{1}, a_{2} \in \mathcal{T}_{11}$ and $b_{1}, b_{2} \in \mathcal{T}_{22}$. Then
    	\begin{equation}
    		\begin{aligned}
    			&(i)~D (a_{1}+a_{2} )=D (a_{1} )+D (a_{2} )\\
    			&(ii)~D (b_{1}+b_{2} )=D (b_{1} )+D (b_{2} ).
    		\end{aligned}
    	\end{equation}
    \end{lemma}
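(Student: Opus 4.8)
\emph{Proof proposal.} The plan is to follow the same template as the proofs of Lemma~\ref{lem4.2} and Lemma~\ref{lem4.5}: set $\Delta := D(a_{1}+a_{2})-D(a_{1})-D(a_{2})$, test $\Delta$ against suitable one-sided multipliers, cancel the integer factors produced by the defining identity using the torsion hypothesis~(iv), and then kill each of the three coordinates $\Delta_{11},\Delta_{12},\Delta_{22}$ via the annihilator conditions. I will carry out (i); part (ii) is the mirror statement.

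To locate $\Delta_{12}$ and $\Delta_{22}$, first fix an arbitrary $c\in\mathcal{T}_{22}$. Since $\mathcal{T}_{11}\mathcal{T}_{22}=0$, each of $a_{1}c$, $a_{2}c$, $(a_{1}+a_{2})c$ is zero, so feeding these three products into $(m+n)D(xy)=2mD(x)y+2nxD(y)$ and subtracting gives $2m\,\Delta\,c=0$; by~(iv) this forces $\Delta c=0$, whence $\Delta_{12}c=0$ and $\Delta_{22}c=0$ for every $c\in\mathcal{T}_{22}$. Condition~(b) of Definition~\ref{def1} then yields $\Delta_{12}=0$, and condition~(ii) of Theorem~\ref{der3} yields $\Delta_{22}=0$.

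For $\Delta_{11}$ I would instead test against $\mathcal{T}_{12}$. Given $p\in\mathcal{T}_{12}$, the products $a_{1}p$, $a_{2}p$, $(a_{1}+a_{2})p$ all lie in $\mathcal{T}_{12}$ and $(a_{1}+a_{2})p=a_{1}p+a_{2}p$, so Lemma~\ref{lem4.5} gives $D((a_{1}+a_{2})p)=D(a_{1}p)+D(a_{2}p)$. Applying the defining identity to each of these products and comparing (the $2n(a_{1}+a_{2})D(p)$ terms cancel) produces $2m\,\Delta\,p=0$, hence $\Delta p=0$ by~(iv), hence $\Delta_{11}p=0$ for all $p\in\mathcal{T}_{12}$; condition~(a) of Definition~\ref{def1} (faithfulness of $M$ as left $A$-module) then gives $\Delta_{11}=0$, completing~(i). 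For~(ii) one runs the dual argument on $\Delta':=D(b_{1}+b_{2})-D(b_{1})-D(b_{2})$: multiplying on the \emph{left} by $t_{1}\in\mathcal{T}_{11}$ (so that $t_{1}b_{i}=0$) and using conditions~(iii) and~(b) kills $\Delta'_{11}$ and $\Delta'_{12}$, while applying $D$ to $p(b_{1}+b_{2})=pb_{1}+pb_{2}$ for $p\in\mathcal{T}_{12}$, via Lemma~\ref{lem4.5}, kills $\Delta'_{22}$ through condition~(a).

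The argument contains no new idea beyond Lemmas~\ref{lem4.2}--\ref{lem4.5} (it is the $(m,n)$-analogue of Lemma~\ref{lem4} and Lemma~\ref{lem3.6}); the only things to watch are notational — the integers $m,n$ must not be confused with module elements — and that every cancellation of a factor $2m$ or $2n$ is licensed by the torsion hypothesis~(iv). I do not anticipate a genuine obstacle here.
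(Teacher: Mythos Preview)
Your proposal is correct and follows essentially the same approach as the paper: right-multiply $\Delta$ by $t_{2}\in\mathcal{T}_{22}$ (using $(a_{1}+a_{2})t_{2}=0$) to kill $\Delta_{12}$ and $\Delta_{22}$ via conditions~(b) and~(ii), then right-multiply by $p\in\mathcal{T}_{12}$ and invoke Lemma~\ref{lem4.5} to kill $\Delta_{11}$ via condition~(a), with each integer factor cancelled by~(iv). The paper writes out exactly this computation for~(i) and dismisses~(ii) with ``similarly''; your dual sketch for~(ii) (left-multiply by $t_{1}\in\mathcal{T}_{11}$ and by $p\in\mathcal{T}_{12}$) is a correct realization of that symmetry.
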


    \begin{proof}
    	Let $t_{2} \in \mathcal{T}_{22}$. Then
    	\begin{equation}
    		\label{eq:4.25}
    		\begin{aligned}
    			(m+n) D ( (a_{1}+a_{2} ) t_{2} )=2 m D (a_{1}+a_{2} ) t_{2}+2 n (a_{1}+a_{2} ) D (t_{2} ).
    		\end{aligned}
    	\end{equation}
    	
    	Also,
    	\begin{equation}
    		\label{eq:4.26}
    		\begin{aligned}
    			(m+n) D ( (a_{1}+a_{2} ) t_{2} )&=(m+n) D(0)\\
    			&=(m+n) (D (a_{1} t_{2} )+D (a_{2} t_{2} ))\\
    			&=2 m D (a_{1} ) t_{2}+2 n a_{1} D (t_{2} )+2 m D (a_{2} ) t_{2}+2 n a_{2} D (t_{2} ).
    		\end{aligned}
    	\end{equation}
    	
    	Comparing \eqref{eq:4.25} and \eqref{eq:4.26},
    	\begin{equation}
    		\label{eq:4.27}
    		\begin{aligned}
    			& 2m[D(a_1+a_2)-D(a_1)-D(a_2)]t_2=0 \\
    			& \implies [D(a_1+a_2)-D(a_1)-D(a_2)]t_2=0~(\text{By condition (iv) of Theorem \ref{der3}})\\
    			& \implies [D(a_1+a_2)-D(a_1)-D(a_2)]_{12}t_2=0\\
    			& \&~[D(a_1+a_2)-D(a_1)-D(a_2)]_{22}t_2=0\\
    			& \implies [D(a_1+a_2)-D(a_1)-D(a_2)]_{12}=0~(\text{By condition (b) of Definition \ref{def1}})\\
    			& \&~[D(a_1+a_2)-D(a_1)-D(a_2)]_{22}=0~(\text{By condition (ii) of Theorem \ref{der3}}).
    		\end{aligned}
    	\end{equation}
    	
    	Let $p \in \mathcal{T}_{12}$. Then
    	\begin{equation}
    		\label{eq:4.28}
    		\begin{aligned}
    			(m+n) D ((a_{1}+a_{2} ) p)=2 m D (a_{1}+a_{2} ) p+2 n (a_{1}+a_{2} ) D(p) .
    		\end{aligned}
    	\end{equation}
    	
    	Also,
    	\begin{equation}
    		\label{eq:4.29}
    		\begin{aligned}
    			(m+n) D ((a_{1}+a_{2} ) p)&=(m+n) D (a_{1} p+a_{2} p )\\
    			&=(m+n) [D (a_{1} p )+D (a_{2} p ) ]~(\text{By Lemma \ref{lem4.5}})\\
    			&=2 m D (a_{1} ) p+2 n a_{1} D(p)+2 m D (a_{2} )p+2 n a_{2} D(p).
    		\end{aligned}
    	\end{equation}
    	
    	Using \eqref{eq:4.28} and \eqref{eq:4.29},
    	\begin{equation}
    		\label{eq:4.30}
    		\begin{aligned}
    			& 2 m [D (a_{1}+a_{2} )-D (a_{1} )-D (a_{2} ) ] p=0\\
    			& \implies [D (a_{1}+a_{2} )-D (a_{1} )-D (a_{2} ) ] p=0 ~(\text{By condition (iv) of Theorem \ref{der3}})\\
    			& \implies [D (a_{1}+a_{2} )-D (a_{1} )-D (a_{2} ) ]_{11} p=0\\
    			& \implies [D (a_{1}+a_{2} )-D (a_{1} )-D (a_{2} ) ]_{11} =0~(\text{By condition (a) of Definition \ref{def1}})
    		\end{aligned}
    	\end{equation}
    	By \eqref{eq:4.27} and \eqref{eq:4.30}, we have $(i)$ of Lemma \ref{lem4.6}. Similarly, we can prove $(ii)$ of Lemma \ref{lem4.6}.
    \end{proof}

    \begin{lemma}
    	\label{lem4.7}
    	Let $a \in \mathcal{T}_{11}, p \in \mathcal{T}_{12}$ and $b \in \mathcal{T}_{22}$. Then
    	\begin{equation}
    		D (a+p+b )=D (a )+D(p)+D (b ).
    	\end{equation}
      \end{lemma}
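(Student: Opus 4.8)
The plan is to mirror the proof of Lemma \ref{lem3.7}. Put $\Delta := D(a+p+b) - D(a) - D(p) - D(b)$ and write $\Delta = \Delta_{11} + \Delta_{12} + \Delta_{22}$ with $\Delta_{ij} \in \mathcal{T}_{ij}$; I will show each $\Delta_{ij}$ vanishes by multiplying $a+p+b$ on the right by a suitable homogeneous element and computing $D$ of the resulting product in two ways through the defining identity $(m+n)D(xy) = 2mD(x)y + 2nxD(y)$. Throughout I use the grading relation $\mathcal{T}_{ij}\mathcal{T}_{kl} = 0$ for $j \neq k$, together with $D(0) = 0$.

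First I take $a_1 \in \mathcal{T}_{11}$. Since $pa_1 = ba_1 = 0$, we have $(a+p+b)a_1 = aa_1$, so on the one hand $(m+n)D((a+p+b)a_1) = 2mD(a+p+b)a_1 + 2n(a+p+b)D(a_1)$, while on the other hand, inserting the vanishing terms $D(pa_1) = D(ba_1) = 0$ and applying the defining identity to each of $D(aa_1)$, $D(pa_1)$, $D(ba_1)$ gives $2mD(a)a_1 + 2naD(a_1) + 2mD(p)a_1 + 2npD(a_1) + 2mD(b)a_1 + 2nbD(a_1)$. The $2n$-terms combine into $2n(a+p+b)D(a_1)$ and cancel, leaving $2m\Delta a_1 = 0$; by condition (iv) of Theorem \ref{der3} this forces $\Delta a_1 = 0$, that is $\Delta_{11}a_1 = 0$ (the other two components being annihilated by $a_1$ automatically), and since this holds for all $a_1 \in \mathcal{T}_{11}$, condition (i) of Theorem \ref{der3} yields $\Delta_{11} = 0$. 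Next I repeat the computation with $t_2 \in \mathcal{T}_{22}$ in place of $a_1$: now $at_2 = 0$, so $(a+p+b)t_2 = pt_2 + bt_2$ with $pt_2 \in \mathcal{T}_{12}$ and $bt_2 \in \mathcal{T}_{22}$, and Lemma \ref{lem4.2}(ii) gives $D(pt_2 + bt_2) = D(pt_2) + D(bt_2)$; inserting $D(at_2) = 0$ and expanding exactly as before produces $2m\Delta t_2 = 0$, hence $\Delta t_2 = 0$ by condition (iv), i.e. $\Delta_{12}t_2 = \Delta_{22}t_2 = 0$ for all $t_2 \in \mathcal{T}_{22}$. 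Condition (b) of Definition \ref{def1} then forces $\Delta_{12} = 0$ and condition (ii) of Theorem \ref{der3} forces $\Delta_{22} = 0$. Combining the three, $\Delta = 0$, which is the assertion.

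I do not anticipate a genuine obstacle: the argument is just the two product computations above, and is shorter than that of Lemma \ref{lem3.7} since only Lemma \ref{lem4.2} is invoked. The only steps requiring care are checking that the mixed products $pa_1$, $ba_1$, $at_2$ really vanish (so that $D$ applied to them may be freely inserted as zero summands), and splitting $D(pt_2 + bt_2)$ via Lemma \ref{lem4.2}(ii) rather than via Lemma \ref{lem4.5}. As in Lemma \ref{lem3.7}, right multiplication alone pins down all three graded components, so no left-multiplication step is needed.
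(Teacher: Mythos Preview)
Your proposal is correct and follows essentially the same argument as the paper's proof: right-multiply $a+p+b$ first by an arbitrary $a_1\in\mathcal{T}_{11}$ to kill $\Delta_{11}$ via conditions (iv) and (i), then by an arbitrary $t_2\in\mathcal{T}_{22}$, invoking Lemma~\ref{lem4.2}(ii) to split $D(pt_2+bt_2)$, to kill $\Delta_{12}$ and $\Delta_{22}$ via conditions (iv), (b), and (ii). The details and the lemmas cited match the paper exactly.
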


    \begin{proof}
    	Let $a_{1} \in \mathcal{T}_{11}$. Then
    	\begin{equation}
    		\label{eq:4.32}
    		\begin{aligned}
    			(m+n) D ( (a+p+b ) a_{1} )=2 m D (a+p+b ) a_{1}+2 n (a+p+b ) D (a_{1} ).
    		\end{aligned}
    	\end{equation}
    	
    	Also,
    	\begin{equation}
    		\label{eq:4.33}
    		\begin{aligned}
    			(m+n) D ( (a+p+b ) a_{1} )&=(m+n) D (a a_{1} ) \\
    			&=(m+n) [D (a a_{1} )+D (p a_{1} )+D (b a_{1} ) ]\\
    			&=2 m D (a ) a_{1}+2 n a D (a_{1} )+2 m D(p) a_{1}\\&+2 n p D (a_{1} ) +2 m D (b ) a_{1}+2 n b D (a_{1} )
    		\end{aligned}
    	\end{equation}
    	
    	Comparing \eqref{eq:4.32} and \eqref{eq:4.33}, we have
    	\begin{equation}
    		\label{eq:4.34}
    		\begin{aligned}
    			& 2m [D (a+p+b )-D (a )-D(p)-D (b ) ] a_{1}=0 \\
    			& \implies [D (a+p+b )-D (a )-D(p)-D (b ) ] a_{1}=0 \\
    			&~(\text{By condition (iv) of Theorem \ref{der3}})\\
    			& \implies [D (a+p+b )-D (a )-D(p)-D (b ) ]_{11}a_{1}=0\\
    			& \implies [D (a+p+b )-D (a )-D(p)-D (b ) ]_{11}=0\\
    			&~(\text{By condition (i) of Theorem \ref{der3}}).
    		\end{aligned}
    	\end{equation}
    	
    	Let $t_{2} \in \mathcal{T}_{22}$. Then
    	\begin{equation}
    		\label{eq:4.35}
    		\begin{aligned}
    			(m+n) D ( (a+p+b ) t_{2} )=2 m D (a+p+b ) t_{2}+2 n (a+p+b ) D (t_{2} ).
    		\end{aligned}
    	\end{equation}
    	
    	Also,
    	\begin{equation}
    		\label{eq:4.36}
    		\begin{aligned}
    			(m+n) D ( (a+p+b ) t_{2} )&=(m+n) D (p t_{2}+b t_{2} )\\
    			&=(m+n) [D (p t_{2} )+D (b t_{2} ) ]~(\text{By Lemma \ref{lem4.2}})\\
    			&=(m+n) [D (a t_{2} )+D (p t_{2} )+D (b t_{2} ) ]\\
    			&=2 m D (a ) t_{2}+2 n a D (t_{2} )+2 m D(p) t_{2}\\
    			&+2 n p D (t_{2} ) +2 m D (b ) t_{2}+2 n b D (t_{2} ).
    		\end{aligned}
    	\end{equation}
    	
    	Comparing \eqref{eq:4.35} and \eqref{eq:4.36}, we have
    	\begin{equation}
    		\label{eq:4.37}
    		\begin{aligned}
    			& 2m [D (a+p+b )-D (a )-D(p)-D (b ) ] t_{2}=0\\
    			& \implies [D (a+p+b )-D (a )-D(p)-D (b ) ] t_{2}=0\\
    			& ~(\text{By condition (iv) of Theorem \ref{der3}})\\
    			& \implies [D (a+p+b )-D (a )-D(p)-D (b ) ]_{12} t_{2}=0\\
    			& \&~ [D (a+p+b )-D (a )-D(p)-D (b ) ]_{22} t_{2}=0\\
    			& \implies [D (a+p+b )-D (a )-D(p)-D (b ) ]_{12} =0\\
    			& ~(\text{By condition (b) of Definition \ref{def1}})\\
    			& \&~ [D (a+p+b )-D (a )-D(p)-D (b ) ]_{22} =0\\
    			& ~(\text{By condition (ii) of Theorem \ref{der3}}).
    		\end{aligned}
    	\end{equation}
    	By \eqref{eq:4.34} and \eqref{eq:4.37}, we get the desired result.
    \end{proof}

    Now, we are ready to prove Theorem \ref{der3}.

    \begin{proof}[Proof of Theorem \ref{der3}]
    	Let $a, b \in \mathcal{T}$. Then $a=a_{11}+a_{12}+a_{22}$ and $b=b_{11}+b_{12}+b_{22}$ where $a_{ij}$, $b_{ij}\in \mathcal{T}_{ij}$.
    	
    	\begin{equation}
    		\begin{aligned}
    			D(a+b) &= D (a_{11}+a_{12}+a_{22}+b_{11}+b_{12}+b_{22} )\\
    			&=D (a_{11}+b_{11}+a_{12}+b_{12}+a_{22}+b_{22} )\\
    			&=D (a_{11}+b_{11} )+D (a_{12}+b_{12} )+D (a_{22}+b_{22} )~(\text{By Lemma \ref{lem4.7}})\\
    			&=D (a_{11} )+D (b_{11} )+D (a_{12} )+D (b_{12} )+D (a_{22} )+D (b_{22} )\\
    			& ~(\text{By Lemma \ref{lem4.5} and \ref{lem4.6}} )
    			\\
    			&=D (a_{11} )+D (a_{12} )+D (a_{22} )+D (b_{11} )+D (b_{12} )+D (b_{22} )\\
    			&=D (a_{11}+a_{12}+a_{22} )+D (b_{11}+b_{12}+b_{22} ) ~(\text{By Lemma \ref{lem4.7}} )\\
    			&=D(a)+D(b).
    		\end{aligned}
    	\end{equation}
    	
    	Hence, $D$ is additive. Thus, $D$ is a $(m,n)$-derivation.
    \end{proof}

 \section*{Acknowledgement}

The first author is thankful to the University Grants Commission (UGC), Govt. of India for financial supports under Sr. No-2121540952, Ref. No. 20/12/2015(ii)EU-V dated 31/08/2016 and all authors are thanful to the Indian Institute of Technology Patna for providing the research facilities.

\end{document}